\numberwithin{equation}{section} 
\newcounter{cont}[section] 
\newtheorem{thm}[cont]{Theorem}
\newtheorem{prop}[cont]{Proposition}
\newtheorem{lem}[cont]{Lemma}
\theoremstyle{definition}
\newtheorem{defn}[cont]{Definition}
 \theoremstyle{remark}
 \newtheorem{rem}[cont]{Remark}
\newcommand{\R}{\mathbb{R}}
\newcommand{\e}{\varepsilon}
\begin{document}

\title[Slow motion of interface layers for Allen-Cahn equation]{Exponentially slow motion of interface layers for 
the one-dimensional Allen-Cahn equation with nonlinear phase-dependent diffusivity}

\author[R. Folino]{Raffaele Folino}
\address[R. Folino]{Departamento de Matem\'aticas y Mec\'anica\\Instituto de Investigaciones en Matem\'aticas Aplicadas y en 
Sistemas\\Universidad Nacional Aut\'{o}noma de M\'{e}xico\\Circuito Escolar s/n, Ciudad Universitaria, C.P. 04510\\Cd. de M\'{e}xico (Mexico)}
\email{folino@mym.iimas.unam.mx}

\author[C. A. Hern\'andez Melo]{C\'esar A. Hern\'andez Melo}
\address[C. A. Hern\'andez Melo]{Departamento de Matem\'atica\\Universidade Estadual de Maring\'a\\Av. Colombo, 
5790 Jd. Universit\'ario, CEP 87020-900, Maring\'a, PR (Brazil)}
\email{cahmelo@uem.br}

\author[L. F. L\'{o}pez R\'{\i}os]{Luis F. L\'{o}pez R\'{\i}os}
\address[L. F. L\'{o}pez R\'{\i}os]{Departamento de Matem\'aticas y Mec\'anica\\Instituto de Investigaciones en Matem\'aticas Aplicadas y 
en Sistemas\\Universidad Nacional Aut\'{o}noma de M\'{e}xico\\Circuito Escolar s/n, Ciudad Universitaria, C.P. 04510\\Cd. de M\'{e}xico (Mexico)}
\email{luis.lopez@iimas.unam.mx}
	
\author[R. G. Plaza ]{Ram\'{o}n G. Plaza}
\address[R. G. Plaza]{Departamento de Matem\'aticas y Mec\'anica\\Instituto de Investigaciones en Matem\'aticas Aplicadas 
y en Sistemas\\Universidad Nacional Aut\'{o}noma de M\'{e}xico\\Circuito Escolar s/n, Ciudad Universitaria, C.P. 04510\\Cd. de M\'{e}xico (Mexico)}
\email{plaza@mym.iimas.unam.mx}
 
\keywords{nonlinear diffusion; metastability; energy estimates}
\subjclass[2010]{35K20, 35K57, 35B36, 82B26}

\maketitle


\begin{abstract} 
This paper considers a one-dimensional generalized Allen-Cahn equation of the form
\[
u_t = \e^2 (D(u)u_x)_x - f(u),
\] 
where $\e > 0$ is constant, $D = D(u)$ is a positive, uniformly bounded below diffusivity coefficient that depends on 
the phase field $u$ and $f(u)$ is a reaction function that can be derived from a double-well potential with minima at two pure phases $u = \alpha$ and $u = \beta$. 
It is shown that interface layers (namely, solutions that are equal to $\alpha$ or $\beta$ except at a finite number of thin transitions of width $\e$) 
persist for an exponentially long time proportional to $\exp(C/\e)$, where $C > 0$ is a constant. 
In other words, the emergence and persistence of \emph{metastable patterns} for this class of equations is established. 
For that purpose, we prove energy bounds for a renormalized effective energy potential of Ginzburg-Landau type. 
Numerical simulations, which confirm the analytical results, are also provided.
\end{abstract}


\section{Introduction}\label{sec:intro}
\subsection{The Allen-Cahn model}
The classical Allen-Cahn equation \cite{AlCa79} (also known as the time-dependent Ginz\-burg-Landau equation \cite{Schmd66}),
\begin{equation}\label{eq:cAC}
	u_t = \e^2 D_0 \Delta u - F'(u), \quad x \in \Omega, \: t > 0,
\end{equation}
was introduced to model the motion of spatially non-uniform phase structures in crystalline solids. 
It describes the state of a system confined in a bounded space domain $\Omega \subset \R^n$, $n \geq 1$, 
in terms of a scalar phase field, $u = u(x,t)$ (also called ``order parameter" \cite{GreHu64}), 
depending on space and time variables, $x \in \Omega$ and $t > 0$, respectively, which interpolates two homogeneous pure components, 
$u = \alpha$ and $u = \beta$, of the binary alloy. 
The potential $F\in C^3(\R)$ is a prescribed function of the phase field $u$, 
having a double-well shape with local minima at the preferred $\alpha$- and $\beta$-phases. 
The parameter $\e>0$ measures the interface width separating the phases and $D_0>0$ is a constant diffusion coefficient (also known as \emph{mobility}). 
Associated to equation \eqref{eq:cAC} is the Ginzburg-Landau free energy functional,
\begin{equation}\label{eq:GLf}
	\overline{E}_\e[u] = \int_{\Omega} \left\{ \frac{1}{2} \e^2 D_0 |\nabla u|^2 + F(u) \right\} \, dx.
\end{equation}
The free energy per unit volume has two contributions:
$F(u)$ is the free energy that a small volume would have in an homogeneous concentration with value $u$, 
whereas the term $\tfrac{1}{2} \e^2 D_0 |\nabla u|^2$ penalizes spatial variation with an energy cost associated to an interface between the two pure phases. 
In general, this ``gradient energy"  density is also a function of the local composition \cite{CaHi58,Cahn59}. 
It is well-known (see, e.g., \cite{CISc13,Fif02} and the references therein) that, 
if no constraints are imposed on the total value of the phase field $u$ in $\Omega$, 
then the $L^2(\Omega)$-gradient flow of the Ginzburg-Landau functional \eqref{eq:GLf} is the Allen-Cahn equation \eqref{eq:cAC} 
endowed with homogeneous Neumann boundary conditions,
\begin{equation}\label{eq:Nbc}
	\partial_\nu u = 0, \quad \text{on } \; \partial \Omega,
\end{equation}
(where $\nu \in \R^n$, $|\nu| = 1$, is the outer unit normal at each point of $\partial \Omega$) describing no flux of atoms outside the physical domain $\Omega$. 
If the integral of $u$ is assumed to be constant, then the $H^{-1}(\Omega)$-gradient flow of \eqref{eq:GLf} results into the well-known Cahn-Hilliard model \cite{CaHi58,CISc13}.

When the parameter $0<\e\ll1$ is very small, the order parameter $u$ concentrates near the pure components $\alpha$ and $\beta$, 
except possibly at sharp anti-phase boundaries known as \emph{interface (or phase transition) layers}. 
The limit when $\e\to 0^+$ describes such sharp interface layers separating both phases inside an heterogeneous composition of the alloy. 
In the study of their dynamics, the renormalized Ginzburg-Landau energy functional
\begin{equation}\label{eq:GLf}
	E_\e[u] = \frac{1}{\e} \overline{E}_\e[u] = \int_{\Omega} \left\{ \frac{1}{2} \e D_0 |\nabla u|^2 + \frac{F(u)}{\e} \right\} \, dx,
\end{equation}
plays a key role. 
For instance, sequences with uniformly renormalized energy converge to a function which is equal to $\alpha$ or $\beta$ a.e. 
with a finite-perimeter interface $\partial\{ u=\alpha, u=\beta\}$ (see, e.g., \cite{OwSt91} and the discussion in \cite{KORV07}). 
In the framework of $\Gamma$-convergence theory introduced by De Giorgi \cite{DeGFr75} (see also \cite{Brai02,DalM93}), 
Modica and Mortola \cite{MoMo77a,MoMo77b} showed, for instance, that the $\Gamma(L^1(\Omega))$-limit as $\e\to 0^+$ 
of the renormalized energy functional ${E}_\e[\cdot]$ is proportional to the \emph{perimeter functional},
\begin{equation*}
	E_\e[\cdot] \stackrel{\Gamma}{\longrightarrow} \gamma_0 \mathrm{Per}_\Omega (A_{\alpha \beta}):=
	\gamma_0 \mathcal{H}^{n-1} (\partial A_{\alpha \beta} \cap \Omega), \qquad \e \to 0^+,
\end{equation*}
where $A_{\alpha \beta} = \{ x \in \Omega \, : \, u(x) = \alpha, u(x) = \beta \}$, 
$\mathcal{H}^{n-1}$ is the $n-1$ dimensional Hausdorff measure \cite{Fede69} and $\gamma_0 > 0$ is a constant independent of $\e > 0$, 
an energy lower bound for the renormalized energy. 
In several space dimensions ($n > 1$), it is well known that this fast reaction/slow diffusion regime leads to limiting dynamics of \eqref{eq:cAC} 
when $\e \to 0^+$ in which the interface $\partial A_{\alpha \beta}$ evolves by mean curvature flow (cf. \cite{EvSS92,Ilm93,RSK89}).

In one space dimension ($n=1$), in contrast, the situation is rather peculiar. 
Since here interface layers are points, the Hausdorff measure $\mathcal{H}^0$ reduces to the counting measure in $\R$ and 
the finite perimeter is simply the number $N$ of transitions in the layer structure. 
This results into a negligible effect of the interfaces on the main term of the energy and the interface motion is exponentially slow. 
This phenomenon is known in the literature as \emph{metastability} \cite{BrKo90,CaPe89,CaPe90,FuHa89}: 
interface layers are transient solutions that appear to be stable but which, after an exponentially long time of order $T_\e = O(\exp (1/\e))$, drastically change their shape. 
In this work we establish this (one-dimensional) phenomenon in the case where the diffusion coefficient depends on $u$.

\subsection{Phase-dependent diffusivity}
In some physical situations the diffusion coefficient or mobility should be regarded as a function of the phase field. 
It is to be observed that, in fact, a concentration-dependent diffusional mobility appears in the original derivation of the Ginzburg-Landau energy functional 
by Cahn \cite{Cahn61} and Cahn and Hilliard \cite{CaHi71} (see also \cite{CEN-C96,TaCa94}). 
Such dependence has been incorporated into several mathematical models of Allen-Cahn or Cahn-Hilliard type 
to describe a great variety of physical systems (an abridged list of references include \cite{CISc13,CISc16,DaiDu16b,DaiDu16a,DGN-C99,EllGa96}). 
Particularly, in the physics literature there exist one-dimensional, phase-transitional models with concentration-dependent, 
strictly positive diffusivities which are described by equations of the form  
\begin{equation}\label{eq:Dform}
	u_t = (D(u) u_x)_x - f(u),
\end{equation}
such as the experimental exponential diffusion function for metal alloys (cf. Wagner \cite{Wagn52}) and the Mullins diffusion model for thermal grooving \cite{Broa89,Mull57}.

The first example pertains to the description of the physical properties of metal alloys, 
for which it is well-known that the characteristic lattice parameter varies with the metal composition \cite{HaCl49,Wagn52}. 
Hence, it is customary to propose a phenomenological diffusion profile based on the Boltzmann-Matano method \cite{Boltz1894,Mtn33},
which allows to approximate the diffusion coefficient as a function of concentration over the whole range of composition from one phase to the other. 
This function can be computed from experimental results. 
Wagner \cite{Wagn52} argues that the function that best fits many experiments on binary alloys 
(such as Fe-C, Cu-Zn, Cu-Al and Fe-Al, just to mention a few; see \cite{Wagn52} and the experimental references mentioned therein) 
has the shape of an exponential function
\begin{equation}\label{eq:expDiff}
	D = D(u) = D_0 \exp \left\{c_0\left(u - \tfrac{1}{2}(\alpha + \beta)\right)\right\},
\end{equation}
where $c_0>0$ is an experimental constant and $D_0>0$ is the value of the diffusion at the Matano boundary (where the concentration is the average of the two pure phases). Temperature and pressure are supposed to be constant and changes normal to the direction of diffusion are negligible, 
so that a one-dimensional model is usually applicable (see, e.g., \cite{Lee71}). 
When coupled with a reaction process based on a double-well potential as Allen and Cahn \cite{AlCa79} originally proposed for a Fe-Al metal binary alloy, 
for instance, one recovers an equation of the form \eqref{eq:Dform}. 

The second example refers to thermal grooving, that is, the development of surface groove profiles on a heated polycrystal 
by the mechanism of evaporation-condensation (cf. Mullins \cite{Mull57}, Broabridge \cite{Broa89}). 
The metal polycrystal is assumed to be in quasi-equilibrium of its vapor, and the surface diffusion process, 
as well as the mechanism of evaporation and condensation, are modelled via Gibbs-Thompson formula. 
In view that the properties of the interface do not depend on its orientation, this is essentially a one-dimensional phenomenon. 
After an appropriate transformation (see \cite{Broa89}), the Mullins nonlinear diffusion model of groove development 
can be expressed in terms of a one-dimensional diffusion equation where the nonlinear diffusion coefficient is given by
\begin{equation}\label{eq:MullinsD}
	D(u) = \frac{D_0}{1+ u^2},
\end{equation}
with $D_0>0$, constant. 
Notice that $D>0$ in its whole domain.
When the Mullins pure diffusion model is coupled to a Gibbs energy double-well potential $F$ for the two polycrystal evaporation-condensation phases, 
the result is an reaction-diffusion equation with nonlinear phase-dependent diffusivity of the form \eqref{eq:Dform}.

\subsection{Assumptions and main results}
The goal of this paper is to study the emergence and persistence of metastable phase transition layers 
in a one-dimensional generalized Allen-Cahn model with a phase-dependent diffusivity coefficient of the following form
\begin{equation}\label{eq:D-model}
	u_t=\e^2(D(u)u_x)_x-f(u), \qquad \qquad x\in  (a,b), \; t>0,
\end{equation}
endowed with homogeneous Neumann boundary conditions
\begin{equation}\label{eq:Neu}
	u_x(a,t)=u_x(b,t)=0, \quad \quad \; t>0,
\end{equation}
and initial datum
\begin{equation}\label{eq:initial}
	u(x,0)=u_0(x), \qquad \qquad x\in[a,b].
\end{equation}
Here $\e>0$ is a small parameter, the diffusivity coefficient $D = D(u)$ is a \emph{strictly positive} function of the phase field and 
the reaction term $f=f(u)$ is assumed to be of \emph{bistable} type. More precisely, we assume that there exists an open interval $I\subset\R$ such that $D\in C^2(I)$ satisfies 
\begin{equation}\label{eq:ass-D}
	D(u)\geq d>0, \qquad \qquad \forall\, u\in I,
\end{equation}
and $f\in C^2(I)$ is such that
\begin{equation}\label{eq:ass-f}
	f(\alpha)=f(\beta)=0, \qquad \qquad f'(\alpha)>0, \quad f'(\beta)>0,
\end{equation}
for some $\alpha<\beta$ with $[\alpha,\beta]\subset I$. 
Clearly, if we choose $D(u)\equiv D_0 > 0$, constant, then we recover the classical Allen-Cahn equation \eqref{eq:cAC} in one dimension 
\begin{equation}\label{eq:Al-Ca1d}
	u_t=\e^2 D_0 u_{xx} -F'(u),
\end{equation}
with potential $F(u) = \int^u f(s) \, ds$.

Crucial assumptions in this work concerning the interaction between $f$ and $D$ are
\begin{equation}\label{eq:ass-int0}
	\int_\alpha^\beta f(s)D(s)\,ds=0, 
\end{equation}	
and
\begin{equation}\label{eq:ass-int1}
	\int_\alpha^u f(s)D(s)\,ds>0, \qquad \forall\,u\neq\alpha,\beta.
\end{equation}
It is important to notice that \eqref{eq:ass-D}-\eqref{eq:ass-f}-\eqref{eq:ass-int0}-\eqref{eq:ass-int1} imply that the function
\begin{equation}\label{eq:G}
	G(u):=\int_{\alpha}^{u}f(s)D(s)\, ds
\end{equation}
is an effective double-well potential with wells of equal depth, i.e. $G:I\to\R$ satisfies 
\begin{equation}\label{eq:ass-G}
	\begin{aligned}
		G(\alpha)=G(\beta)=G'(\alpha)&=G'(\beta)=0, \qquad G''(\alpha)>0,\quad G''(\beta)>0, \\ 
		&G(u)>0. \quad \forall\, u\neq\alpha,\beta,
	\end{aligned}
\end{equation}

Equation \eqref{eq:D-model} is a parabolic equation in $I$ in view of the assumption \eqref{eq:ass-D} and 
throughout the paper we consider initial data satisfying
\begin{equation}\label{eq:u0continI}
	M_1\leq u_0(x)\leq M_2, \qquad \qquad \forall\,x\in[a,b],
\end{equation}
for some $M_1<M_2$ such that $[M_1,M_2]\subset I$.
Therefore, we can use the classical maximum principle to conclude that the solution $u$ to the initial boundary value problem
 \eqref{eq:D-model}-\eqref{eq:Neu}-\eqref{eq:initial} remains in $I$ for all time $t\geq0$.

\begin{rem}\label{rem:potential}
The typical example of a reaction function with a bistable structure is the cubic polynomial of the form
\[
	f(u) = (u - \beta)(u - \alpha)(u - u_*),
\]
where $u _* \in (\alpha, \beta)$ is an energy barrier where the potential (up to a constant)
\[
	F(u) = \int^u f(s) \, ds,
\]
has a local maximum. 
Since the minimization of \eqref{eq:GLf} remains unchanged (up to a constant) when $F$ is replaced by the affine transformation 
$F(u) \to F(u) - (c_1 u + c_0)$, in the standard Ginzburg-Landau theory one usually finds energy potentials of the form
\[
	F(u) = \frac{1}{4}(u - \alpha)^2 (u - \beta)^2,
\]
for which the energy barrier lies precisely at the midpoint $u_* = \tfrac{1}{2}(\alpha + \beta)$ 
(it is customary in the literature to consider $\alpha = -1$, $\beta = 1$, yielding $u_* = 0$). 
This produces two energy wells with same minimum value ($F(\alpha)=F(\beta)=0$), symmetrically located with respect to the barrier in between. 
A necessary condition for this to happen is that
\[
	\int_\alpha^\beta f(s) \, ds = 0.
\]
In our case, this condition is the equivalent to \eqref{eq:ass-int0} and, thus, the effective, 
diffusion-weighted energy density $G$ inherits the properties of an equal-well potential. 
In this case, though, the energy barrier might not be symmetrically located depending on the properties of the diffusion function $D$.
\end{rem}

Motivated by previous metastability results for the classical Allen-Cahn equation \eqref{eq:Al-Ca1d} (see, for example, \cite{BrKo90,CaPe89,CaPe90,FuHa89}), 
in this paper we apply the energy approach of Bronsard and Kohn \cite{BrKo90} to rigorously prove 
the existence of metastable states for the initial boundary-value problem (IBVP) \eqref{eq:D-model}-\eqref{eq:Neu}-\eqref{eq:initial}. 
We recall that in \cite{BrKo90} the authors introduce the energy approach to prove persistence of metastable patterns for \eqref{eq:Al-Ca1d} 
for a time $\mathcal{O}(\e^{-k})$ for any $k\in\mathbb{N}$.
Then, the energy approach was improved in \cite{Grnt95}, 
where the author obtains persistence the exponentially long time $\mathcal{O}(\exp\left(-C/\e)\right)$ in the case of Cahn-Morral systems.
By using these ideas, the energy approach has been applied to prove persistence of metastable patterns in many different models, 
see among others \cite{Fol19,FLM19} and references therein.

The main goal of this paper is to show how to adapt the energy approach to the case of a nonlinear diffusion \eqref{eq:D-model}.
For that purpose, we study the following renormalized energy functional
\begin{equation}\label{eq:energy}
	E_\e[u]=\int_a^b\left\{\frac\e2[D(u)u_x]^2+\frac{G(u)}\e\right\}\,dx,
\end{equation}
where $G$ is the effective potential defined in \eqref{eq:G}. 
Notice that the $u$-dependent mobility is involved not only in the gradient term of the energy density, but also in the barrier energy term via the function $G$. 
We regard this energy functional as a \emph{generalized effective energy of Ginzburg-Landau type}.

Let us now present a panoramic overview of the results of this paper. 
The main theorem (Theorem \ref{thm:main} below) establishes that, if an initial datum for equation \eqref{eq:D-model} has an $N$-\emph{transition layer structure} 
(for the precise definition see Definition \ref{def:TLS} below) then the IBVP \eqref{eq:D-model}-\eqref{eq:Neu}-\eqref{eq:initial} maintains this structure for an exponentially long time. 
For that purpose, we prove (see Proposition \ref{prop:lower}) the following variational result for the generalized energy \eqref{eq:energy}: 
if $u \in H^1(a,b)$ underlies an $N$-transition layer structure then there exist constants $A, C, \delta$ and $\e_0>0$, independent of $u$, such that
\begin{equation}\label{eq:sharpest}
	E_\e[u] \geq N \gamma_0 - C \exp(-A/\e),
\end{equation}
for all $\e \in (0,\e_0)$. 
Here, $\gamma_0>0$ is a constant that depends on the energy $F$ and the diffusion coefficient alone (see \eqref{eq:gamma} below). 
Such energy bound is reminiscent (actually, strongly related) to the $\Gamma$-convergence of the functional, 
inasmuch as the perimeter functional is the number of transitions $N$ and the constant $\gamma_0$ is independent of $\e>0$. 
Estimate \eqref{eq:sharpest}, however, is inherently sharper, because it provides exact information on the convergence rate as $\e \to 0^+$. 
The exponential term in \eqref{eq:sharpest} is crucial to show the exponentially slow motion of the interfaces and, 
in turn, to prove our main result, Theorem \ref{thm:main}. 
These results are the content of Section \ref{sec:met}. 

\begin{rem}\label{rem:Gamma-convergence}
In \cite{OwSt91}, the authors study the $\Gamma$-convergence properties of the general class of functionals 
\begin{equation*}
	W_\e[u]:=\int_{\Omega}\frac{1}{\e}w(x,u,\e\nabla u)\,dx,
\end{equation*}
where $\Omega$ is an open, bounded domain in $\R^n$ and the function $w\in C^3(\bar{\Omega}\times\R\times\R^n)$ 
satisfies appropriate assumptions (cfr. assumptions (H1)-(H7) in \cite{OwSt91}).
In particular, the generalized energy \eqref{eq:energy} with $D$ and $G$ satisfying \eqref{eq:ass-D}-\eqref{eq:ass-G} enters in the framework of \cite{OwSt91}, 
but as it was already mentioned, the $\Gamma$-convergence properties of \cite{OwSt91} are not sufficient 
to prove the exponentially slow motion of the solutions to \eqref{eq:D-model}-\eqref{eq:Neu}-\eqref{eq:initial} and we need the sharper inequality \eqref{eq:sharpest}.
We also stress that \eqref{eq:sharpest} holds for generic functions $D$ and $G$ satisfying \eqref{eq:ass-D}-\eqref{eq:ass-G} 
($G$ can be different from \eqref{eq:G}) and that the lower bound \eqref{eq:sharpest} (valid only in the one-dimensional case) 
can be used to find the $\Gamma(L^1(a,b)^-)$-limit of \eqref{eq:energy} (by proceeding as in \cite{OwSt91}).
Indeed, Proposition \ref{prop:lower} implies that for any sequence $u^{\e}$ converging in $L^1$ to a step function $v$,
which assumes only the values $\alpha,\beta$ and with exactly $N$ jumps, one has
\begin{equation*}
	\liminf_{\e\to0^+}E_\e[u^{\e}]\geq N\gamma_0.
\end{equation*}
Moreover, we can properly construct a sequence $u^{\e}$ such that the equality holds (see Proposition \ref{prop:ex-met}).
\end{rem}

Section \ref{sec:exist} is devoted to show the existence of metastable patterns. 
We construct a family of functions having an $N$-transition layer structure (see Proposition \ref{prop:ex-met}) 
based on standing wave solutions to \eqref{eq:D-model} with asymptotic boundary conditions. 
In addition, we provide an upper bound for the velocity of the transitional points of the solution (see Theorem \ref{thm:interface}), 
conveying a precise characterization of the dynamics of the interfaces. 
This estimation relies on a purely variational result (Lemma \ref{lem:interface} below) which states that, 
if $u$ underlies an $N$-transition layer structure and $E_\e[u]$ slightly exceeds the minimum energy to have $N$ transitions, 
then the Hausdorff distances between the interface associated to $u$ and the $N$ transition structure is arbitrarily small. 
We finish the paper by presenting the results of some numerical simulations with diffusivities of the form $D=D_0$, constant (the classical Allen-Cahn model), 
the exponential function \eqref{eq:expDiff} and Mullins diffusion \eqref{eq:MullinsD}. 
These numerical experiments confirm the analytical results (see Section \ref{sec:num} below). 
Furthermore, numerical simulations with \emph{degenerate} diffusion (e.g., $D$ vanishing at one or both of the pure phases, like for example, a \emph{porous medium} type diffusion \cite{Vaz07}) 
or degenerate reaction (that is, $f'(\alpha)$ or $f'(\beta)$ equal to zero) were also performed. 
These choices were motivated by physical considerations (see the discussion on Section \ref{sec:discussion}).

\section{Persistence of metastable patterns}\label{sec:met}
The main goal of this section is to show that if the initial datum $u_0^\e$ has an $N$-\emph{transition layer structure}, 
then the solution to the IBVP \eqref{eq:D-model}-\eqref{eq:Neu}-\eqref{eq:initial} maintains such a structure for an exponentially long time as $\e\to0^+$.
\begin{defn}\label{def:TLS}
Let us fix $N\in\mathbb{N}$, a {\it piecewise constant function} $v$ with $N$ transitions
\begin{equation}\label{vstruct}
	\begin{aligned}
	v:[a,b]&\rightarrow\{\alpha,\beta\}\  \hbox{with $N$ jumps located at } a<h_1<h_2<\cdots<h_N<b, \\ 
	&\mbox{and } \ r>0 \hbox{ such that } h_i+r<h_{i+1}-r, \ \hbox{ for}\ i=1,\dots,N,\\
	&\qquad\qquad\qquad   a\leq h_1-r,\ h_N+r\leq b.
	\end{aligned}
\end{equation}
Moreover, fix $D$ and $f$ satisfying \eqref{eq:ass-D}, \eqref{eq:ass-f} and \eqref{eq:ass-int0}-\eqref{eq:ass-int1}.
We say that a function $u^\e\in H^1(a,b)$ has an \emph{$N$-transition layer structure} if 
\begin{equation}\label{eq:ass-u0}
	\lim_{\varepsilon\rightarrow 0} \|u^\varepsilon-v\|_{{}_{L^1}}=0,
\end{equation}
and there exist $C>0$ and $A\in(0,r\sqrt{2\lambda})$, where 
\begin{equation}\label{eq:lambda}
	\lambda:=\min\left\{\frac{f'(\alpha)}{D(\alpha)},\frac{f'(\beta)}{D(\beta)}\right\},
\end{equation}
such that
\begin{equation}\label{eq:energy-ini}
	E_\varepsilon[u^\varepsilon]\leq N\gamma_0+C\exp(-A/\e),
\end{equation}
for any $\varepsilon\ll1$, where the energy $E_\e$ is defined \eqref{eq:energy} and the positive constant $\gamma_ 0$ is given by
\begin{equation}\label{eq:gamma}
	\gamma_0:=\int_{\alpha}^{\beta}\sqrt{2G(s)}D(s)\, ds.
\end{equation}
\end{defn}

The first step to prove persistence of $N$-\emph{transition layer structures} for an exponentially long time is 
to study the behavior of the energy \eqref{eq:energy} along the solutions to \eqref{eq:D-model}-\eqref{eq:Neu}.

\begin{lem}\label{lem:energy}
Let $u\in C([0,T],H^2(a,b))$ be solution to equation \eqref{eq:D-model} with homogeneous Neumann boundary conditions \eqref{eq:Neu}.
If $E_\e$ is the functional defined in \eqref{eq:energy}, then 
\begin{equation}\label{eq:energy-dec}
	E_\e[u](0)-E_\e[u](T)=\e^{-1}\int_0^T\int_{a}^{b} D(u)u_t^2\,dxdt.
\end{equation}
\end{lem}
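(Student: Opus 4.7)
My plan is to derive the identity by computing $\frac{d}{dt}E_\e[u](t)$ pointwise and then integrating in $t$ from $0$ to $T$. The regularity hypothesis $u\in C([0,T],H^2(a,b))$ justifies differentiation under the integral sign and all the integrations by parts that follow.

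Differentiating the density of $E_\e$ in $t$ produces two terms:
\[
\frac{d}{dt}E_\e[u] \;=\; \int_a^b \e\, D(u)u_x\,\partial_t\bigl(D(u)u_x\bigr)\,dx \;+\; \int_a^b \frac{G'(u)}{\e}\,u_t\,dx.
\]
Since $G'(u)=f(u)D(u)$, the second integral is $\e^{-1}\int_a^b f(u)D(u)u_t\,dx$. The key observation for the first integral is that if $\Phi$ denotes an antiderivative of $D$, then $D(u)u_x=\partial_x\Phi(u)$, so by equality of mixed partial derivatives (valid thanks to the $H^2$ regularity),
\[
\partial_t\bigl(D(u)u_x\bigr) \;=\; \partial_x\bigl(D(u)u_t\bigr).
\]
This is the clean identity that makes the argument go through without having to track $D'(u)$ terms explicitly.

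Using this substitution, the first integral becomes $\int_a^b \e\, D(u)u_x\,\partial_x(D(u)u_t)\,dx$. An integration by parts in $x$ gives the boundary term $\bigl[\e D(u)^2 u_x u_t\bigr]_a^b$, which vanishes by the Neumann condition \eqref{eq:Neu}, leaving
\[
\int_a^b \e\, D(u)u_x\,\partial_x(D(u)u_t)\,dx \;=\; -\int_a^b \e\,(D(u)u_x)_x\, D(u)\,u_t\,dx.
\]
Combining the two integrals and factoring out $\e^{-1}D(u)u_t$,
\[
\frac{d}{dt}E_\e[u] \;=\; -\e^{-1}\int_a^b D(u)u_t\,\bigl[\e^2(D(u)u_x)_x - f(u)\bigr]\,dx \;=\; -\e^{-1}\int_a^b D(u)\,u_t^2\,dx,
\]
where the last equality uses the PDE \eqref{eq:D-model}. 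Integrating this in $t$ over $[0,T]$ yields \eqref{eq:energy-dec}.

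The only genuinely delicate point is ensuring all the formal manipulations are legitimate: the product $D(u)u_x$ must be differentiable in $t$ in the weak sense, the integration by parts requires $D(u)u_x$ and $D(u)u_t$ to have a well-defined boundary trace, and the final identification of $u_t$ with the right-hand side of the PDE must be valid pointwise a.e. All of these follow from $u\in C([0,T],H^2(a,b))$ together with the $C^2$ regularity of $D$ and $f$, so no extra hypothesis is needed. I expect the proof itself to be short; the only place where one could slip is forgetting the identity $\partial_t(D(u)u_x)=\partial_x(D(u)u_t)$ and instead expanding both sides, which would produce several $D'(u)$ terms that miraculously cancel—a correct but messier route.
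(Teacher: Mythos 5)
Your proof is correct and is essentially the paper's argument read in the opposite direction: the paper multiplies the PDE by $D(u)u_t$, integrates by parts using the Neumann conditions and the same mixed-partials identity $\partial_x(D(u)u_t)=\partial_t(D(u)u_x)$, and recognizes the result as $-\frac{d}{dt}E_\e[u]$, whereas you start from $\frac{d}{dt}E_\e[u]$ and substitute the PDE at the end. The two computations coincide step for step, so no further comparison is needed.
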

\begin{proof}
Multiplying equation \eqref{eq:D-model} by $D(u)u_t$ and integrating in the interval $[a,b]$, we deduce
\begin{equation*}
	\int_{a}^{b} D(u)u_t^2dx= \int_{a}^{b}\left[\e^2{(D(u)u_x)}_xD(u)u_t-f(u)D(u)u_t\right]dx.
\end{equation*}
Integrating by parts and using the boundary conditions \eqref{eq:Neu}, we get
\begin{equation*}
	\int_{a}^{b} D(u)u_t^2dx= -\int_{a}^{b}\left[\e^2D(u)u_x{(D(u)u_x)}_t+G'(u)u_t\right]dx,
\end{equation*}
where we used the definition \eqref{eq:G}.
Since 
\begin{equation*}
	\e^2D(u)u_x{(D(u)u_x)}_t+G'(u)u_t=\frac{\partial}{\partial t}\left\{\frac{\e^2}2\left[D(u)u_x\right]^2+G(u)\right\},
\end{equation*}	
using the definition of the energy \eqref{eq:energy}, we obtain 
\begin{equation*}
	-\frac{d}{dt}E_\e[u](t)=\e^{-1}\int_{a}^{b} D(u)u_t^2dx,
\end{equation*}
and integrating the latter equality in $[0,T]$ we end up with \eqref{eq:energy-dec}.
\end{proof}
The equality \eqref{eq:energy-dec} holds true for any smooth functions $D,f$; 
in particular, if $D$ satisfies \eqref{eq:ass-D} and the initial datum $u_0$ satisfies \eqref{eq:u0continI}, then we can state that the energy functional \eqref{eq:energy} 
is a non-increasing function of time along the solution to \eqref{eq:D-model}-\eqref{eq:Neu}-\eqref{eq:initial}, namely
\begin{equation}\label{eq:energy-var}
	d\,\e^{-1}\int_0^T\!\!\int_{a}^{b} u_t^2\,dxdt\leq E_\e[u](0)-E_\e[u](T).
\end{equation}
In particular, the latter inequality tells us that if \eqref{eq:energy-ini} is satisfied at time $t=0$, then it holds for any positive time $t$.
Concerning property \eqref{eq:ass-u0}, the main result of this paper states that if the initial datum satisfies \eqref{eq:u0continI} and has a $N$-transition layer structure, 
then the solution to the IBVP \eqref{eq:D-model}-\eqref{eq:Neu}-\eqref{eq:initial} satisfies \eqref{eq:ass-u0} for an exponentially long time, 
and so we can conclude that the solution maintains the same structure of the initial datum for an exponentially long time.
\begin{thm}\label{thm:main}
Assume that $f,D\in C^2(I)$ satisfy \eqref{eq:ass-D}-\eqref{eq:ass-f}-\eqref{eq:ass-int0}-\eqref{eq:ass-int1}.
Let $v$ be as in \eqref{vstruct} and let $A\in(0,r\sqrt{2\lambda})$, with $\lambda$ defined in \eqref{eq:lambda}.
If $u^\varepsilon$ is the solution of \eqref{eq:D-model}-\eqref{eq:Neu}-\eqref{eq:initial} 
with initial datum $u_0^{\varepsilon}$ satisfying \eqref{eq:u0continI}, \eqref{eq:ass-u0} and \eqref{eq:energy-ini}, then, 
\begin{equation}\label{eq:limit}
	\sup_{0\leq t\leq m\exp(A/\varepsilon)}\|u^\varepsilon(\cdot,t)-v\|_{{}_{L^1}}\xrightarrow[\varepsilon\rightarrow0]{}0,
\end{equation}
for any $m>0$.
\end{thm}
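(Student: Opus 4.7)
The plan is to run the Bronsard--Kohn energy scheme adapted to the weighted functional $E_\varepsilon$ of \eqref{eq:energy}. Three ingredients drive the argument: the initial energy upper bound \eqref{eq:energy-ini}, the sharp lower bound $E_\varepsilon[u]\ge N\gamma_0-C\exp(-A/\varepsilon)$ valid for every $u\in H^1(a,b)$ sufficiently $L^1$-close to the step function $v$ (cf.\ Proposition~\ref{prop:lower} and \eqref{eq:sharpest}), and the dissipation equality of Lemma~\ref{lem:energy}. Combining the first two traps the energy along the trajectory inside a band of width $\mathcal{O}(\exp(-A/\varepsilon))$; by \eqref{eq:energy-var} this forces the space-time $L^2$-norm of $u_t$ to be of order $\varepsilon\exp(-A/\varepsilon)$. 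A Cauchy--Schwarz estimate in time then converts this smallness into an $L^1$-bound on $u^\varepsilon(\cdot,t)-u_0^\varepsilon$ that persists up to times of order $\exp(A/\varepsilon)$.

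In detail, I would fix $m>0$, set $T_\varepsilon:=m\exp(A/\varepsilon)$, and choose $\delta>0$ small enough that Proposition~\ref{prop:lower} applies throughout the ball $\{w\in H^1(a,b):\|w-v\|_{L^1}\le\delta\}$. Since $\|u_0^\varepsilon-v\|_{L^1}\to 0$ by \eqref{eq:ass-u0} and $t\mapsto u^\varepsilon(\cdot,t)$ is continuous into $L^1$, I would define the first exit time
\[
\tau_\varepsilon:=\inf\bigl\{t\in[0,T_\varepsilon]\,:\,\|u^\varepsilon(\cdot,t)-v\|_{L^1}=\delta\bigr\},
\]
with the convention that $\tau_\varepsilon=T_\varepsilon$ if the set is empty. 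On $[0,\tau_\varepsilon]$ the lower bound applies, so combining \eqref{eq:energy-ini} and the non-increase of $E_\varepsilon$ (Lemma~\ref{lem:energy}) with \eqref{eq:sharpest} yields
\[
E_\varepsilon[u^\varepsilon](0)-E_\varepsilon[u^\varepsilon](t)\le 2C\,e^{-A/\varepsilon},\qquad t\in[0,\tau_\varepsilon],
\]
and then \eqref{eq:energy-var}, together with the uniform positivity $D\ge d$, implies
\[
\int_0^{\tau_\varepsilon}\!\int_a^b (u^\varepsilon_t)^2\,dx\,dt\le \tfrac{2C}{d}\,\varepsilon\, e^{-A/\varepsilon}.
\]

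Next, for any $t\in[0,\tau_\varepsilon]$ the identity $u^\varepsilon(x,t)-u_0^\varepsilon(x)=\int_0^t u^\varepsilon_s(x,s)\,ds$, combined with Cauchy--Schwarz first in $s$ and then in $x$, gives
\[
\|u^\varepsilon(\cdot,t)-u_0^\varepsilon\|_{L^1}^2\le (b-a)\,t\!\int_0^t\!\!\int_a^b (u^\varepsilon_s)^2\,dx\,ds\le C'\,t\,\varepsilon\, e^{-A/\varepsilon}.
\]
At $t=T_\varepsilon=me^{A/\varepsilon}$ the right-hand side equals $C'm\,\varepsilon\to 0$, so together with \eqref{eq:ass-u0} one has $\sup_{t\in[0,\tau_\varepsilon]}\|u^\varepsilon(\cdot,t)-v\|_{L^1}<\delta$ for all $\varepsilon\ll 1$. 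By continuity the infimum defining $\tau_\varepsilon$ cannot be attained in $[0,T_\varepsilon)$, so $\tau_\varepsilon=T_\varepsilon$, and substituting $t=T_\varepsilon$ in the previous display delivers \eqref{eq:limit}.

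The principal obstacle is the bootstrap circularity: the lower bound \eqref{eq:sharpest} is only valid \emph{provided} the solution is already close to $v$, which is the very conclusion one is trying to establish. The first-exit-time device above closes this loop. A secondary subtlety is matching the exponential rates: the hypothesis $A\in(0,r\sqrt{2\lambda})$ built into Definition~\ref{def:TLS} is imposed precisely so that the rate in the initial upper bound \eqref{eq:energy-ini} can be matched by the rate produced by Proposition~\ref{prop:lower}, so the energy gap closes at the same exponential rate on both sides. Finally, the maximum-principle consequence that $u^\varepsilon(\cdot,t)$ stays in $I$ (via \eqref{eq:u0continI}) keeps $D\ge d$ active, so that \eqref{eq:energy-var} legitimately yields the $L^2_{t,x}$-control on $u_t$ used above.
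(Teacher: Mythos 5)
Your proposal is correct and follows essentially the same route as the paper: an energy sandwich between the upper bound \eqref{eq:energy-ini} and the lower bound of Proposition \ref{prop:lower}, the dissipation estimate \eqref{eq:energy-var} to control $\int\!\!\int (u^\varepsilon_t)^2$, a Cauchy--Schwarz step to convert this into $L^1$-control of $u^\varepsilon(\cdot,t)-u_0^\varepsilon$ up to $t\sim e^{A/\varepsilon}$, and a continuation argument to break the circularity. The only (cosmetic) difference is that the paper's bootstrap, packaged as Proposition \ref{prop:L2-norm}, defines the exit time via the accumulated motion $\int_0^{\hat T}\|u_t^\varepsilon\|_{L^1}\,dt=\tfrac12\delta$ rather than via $\|u^\varepsilon(\cdot,t)-v\|_{L^1}=\delta$ as you do.
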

The crucial step in the proof of Theorem \ref{thm:main} is to show a particular lower bound on the energy (see \eqref{eq:sharpest}).
Such a result is purely variational in character and the model \eqref{eq:D-model}-\eqref{eq:Neu} plays no role.
As we already discussed in Remark \ref{rem:Gamma-convergence}, the following lower bound, 
which holds for any strictly positive function $D$ and any double well potential $G$, can be used  to study the $\Gamma$-convergence of the functional \eqref{eq:energy}.
\begin{prop}\label{prop:lower}
Assume that $D\in C^1(I)$ satisfies \eqref{eq:ass-D} and that $G\in C^3(I)$ satisfies \eqref{eq:ass-G}. 
Let 
$$
\lambda:=\min\left\{\frac{G''(\alpha)}{D^2(\alpha)}, \frac{G''(\beta)}{D^2(\beta)}\right\}>0,
$$ 
$v$ be as in \eqref{vstruct} and $A\in(0,r\sqrt{2\lambda})$.
Then, there exist $\e_0,C,\delta>0$ (depending only on $G,v$ and $A$) such that if $u\in H^1(a,b)$ satisfies \eqref{eq:u0continI} and
\begin{equation}\label{eq:u-v}
	\|u-v\|_{{}_{L^1}}\leq\delta,
\end{equation}
then for any $\e\in(0,\e_0)$,
\begin{equation}\label{eq:lower}
	E_\varepsilon[u]\geq N\gamma_0-C\exp(-A/\varepsilon),
\end{equation}
where $E_\e$ and $\gamma_0$ are defined in \eqref{eq:energy} and \eqref{eq:gamma}, respectively.
\end{prop}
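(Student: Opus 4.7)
My plan is to reduce the weighted problem to the classical (constant-mobility) Allen--Cahn lower bound via a change of dependent variable, and then carry out a Modica--Bronsard--Kohn--Grant type argument with an exponentially small boundary defect in the reduced setting.

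The first step is the reduction. Since $D\geq d>0$, the map $\Psi(u):=\int_\alpha^u D(s)\,ds$ is a $C^3$ diffeomorphism of $I$ onto its image. Setting $w:=\Psi(u)$ one has $w_x=D(u)u_x$ and
\[
E_\e[u]=\int_a^b\Bigl\{\tfrac{\e}{2}w_x^2+\tfrac{1}{\e}\tilde G(w)\Bigr\}\,dx,\qquad \tilde G(w):=G(\Psi^{-1}(w)).
\]
A direct computation shows that $\tilde G$ is a $C^3$ double-well potential with wells at $\tilde\alpha:=\Psi(\alpha)$ and $\tilde\beta:=\Psi(\beta)$, and second derivatives $\tilde G''(\tilde\alpha)=G''(\alpha)/D(\alpha)^2$, $\tilde G''(\tilde\beta)=G''(\beta)/D(\beta)^2$, so that $\min\{\tilde G''(\tilde\alpha),\tilde G''(\tilde\beta)\}=\lambda$. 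Moreover, substituting $w=\Psi(u)$ in the integral defining $\gamma_0$ gives $\int_{\tilde\alpha}^{\tilde\beta}\sqrt{2\tilde G(w)}\,dw=\gamma_0$, and Lipschitz control on $\Psi^{\pm 1}$ transfers $\|u-v\|_{L^1}\leq\delta$ into $\|w-\tilde v\|_{L^1}\leq C\delta$, where $\tilde v$ is the step function obtained from $v$ by replacing $\alpha,\beta$ with $\tilde\alpha,\tilde\beta$. Thus it suffices to prove \eqref{eq:lower} for the classical energy with potential $\tilde G$.

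Next I would apply the Modica pointwise inequality
\[
\tfrac{\e}{2}w_x^2+\tfrac{1}{\e}\tilde G(w)\geq\sqrt{2\tilde G(w)}\,|w_x|=\bigl|(\tilde\Phi\circ w)_x\bigr|,\qquad \tilde\Phi(s):=\int_{\tilde\alpha}^s\sqrt{2\tilde G(\tau)}\,d\tau,
\]
and decompose $[a,b]$ into the $N$ transition intervals $J_i=[h_i-r,h_i+r]$ and their complementary gap intervals on which $\tilde v$ is constant. Discarding the nonnegative gap contributions and integrating on each $J_i$ gives
\[
E_\e[u]\geq\sum_{i=1}^N\bigl|\tilde\Phi(w(h_i+r))-\tilde\Phi(w(h_i-r))\bigr|.
\]
Each summand equals $\gamma_0$ minus two nonnegative boundary defects of the form $\tilde\Phi(\tilde\alpha+\zeta)$ or $\gamma_0-\tilde\Phi(\tilde\beta-\zeta)$, and these vanish quadratically in $\zeta$ because $\tilde\Phi'(\tilde\alpha)=\tilde\Phi'(\tilde\beta)=0$. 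So the proof reduces to showing that $|w(h_i\pm r)-\tilde\alpha|^2$ and $|w(h_i\pm r)-\tilde\beta|^2$ are $\mathcal O(\exp(-A/\e))$ on the appropriate sides.

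The main obstacle is this exponential closeness, and it is where the hypothesis $A<r\sqrt{2\lambda}$ is consumed. On a gap interval $K$ adjacent to $J_i$ where $\tilde v\equiv\tilde\alpha$, the $L^1$-bound $\|w-\tilde\alpha\|_{L^1(K)}\leq C\delta$ together with a Chebyshev argument produces a point $\xi\in K$ at which $|w(\xi)-\tilde\alpha|$ is small; by choosing $\delta$ small and using the $\tilde G(w)/\e$ term to control the measure of the "far-from-$\tilde\alpha$" set, $\xi$ can be placed at distance $\geq r$ from the adjacent transition endpoint $h_i\mp r$. In the subinterval connecting $\xi$ to $h_i\mp r$ the quadratic approximation $\tilde G(s)\geq\tfrac{\lambda}{2}(1-\eta)(s-\tilde\alpha)^2$ is available, and multiplying the corresponding harmonic-oscillator-type density $\tfrac{\e}{2}w_x^2+\tfrac{\lambda(1-\eta)}{2\e}(w-\tilde\alpha)^2$ by an exponential weight $e^{\mu x/\e}$ and integrating by parts yields, for any $\mu<\sqrt{2\lambda(1-\eta)}$, a Gronwall-type inequality forcing $(w(x)-\tilde\alpha)^2$ to decay from $\xi$ like $\exp(-2\mu|x-\xi|/\e)$. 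Choosing $\mu$ and $\eta$ close to their critical values so that $2\mu r=A$ then delivers the desired defect. The delicate point is the bootstrap ensuring that $w$ stays inside the quadratic regime along the whole propagation subinterval: this is not implied by $L^1$-smallness alone and requires iteratively subdividing into "close" and "far" sub-regions and absorbing the latter using the energy, with $\delta$, $\eta$, and $\e_0$ chosen in the correct order so that all constants remain uniform.
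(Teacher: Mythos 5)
Your reduction $w=\Psi(u)$ with $\Psi(u)=\int_\alpha^u D(s)\,ds$ is correct and is a genuinely different (and cleaner) starting point than the paper, which keeps the weight $D$ throughout and runs the entire argument on the weighted functional \eqref{eq:energy}: the identities $\tilde G''(\tilde\alpha)=G''(\alpha)/D(\alpha)^2$, $\int_{\tilde\alpha}^{\tilde\beta}\sqrt{2\tilde G(w)}\,dw=\gamma_0$ and the Lipschitz transfer of the $L^1$-constraint all check out, and the Young/Modica decomposition into transition intervals matches the paper's \eqref{eq:ineq}--\eqref{eq:Pe}.

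The gap is in the key step, the exponential smallness of the boundary defects. The claim that a weighted integration by parts ``forces $(w(x)-\tilde\alpha)^2$ to decay from $\xi$ like $\exp(-2\mu|x-\xi|/\e)$'' is false for a general $u\in H^1$ satisfying only \eqref{eq:u0continI} and \eqref{eq:u-v}: no upper bound on $E_\e[u]$ is assumed in the proposition, and even granting one, a function with small $L^1$ distance to $v$ can take any value in $[M_1,M_2]$ at the single point $h_i-r$ (a narrow spike destroys pointwise decay while leaving the $L^1$ norm essentially unchanged). What your weighted estimate actually yields is a lower bound of the form $\int e\,dx\geq\tfrac{\mu}{2}\bigl(q(h_i-r)^2-e^{-\mu(h_i-r-\xi)/\e}q(\xi)^2\bigr)$ subject to $\mu\le\sqrt{\tilde G''(\tilde\alpha)(1-\eta)/2}$, whereas the defect to be recovered is $\tilde\Phi(\tilde\alpha+q)\sim\tfrac12\sqrt{\tilde G''(\tilde\alpha)}\,q^2$; the constants differ by a factor of at least $\sqrt2$, so you are left with an unabsorbed error of order $\rho^2$, not $\exp(-A/\e)$, and the ``iteratively subdividing'' remark does not repair this mismatch. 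The missing idea---the heart of the paper's proof---is to replace $u$ on the buffer interval $[h_i+r_+,h_i+r]$ by the \emph{constrained minimizer} $z$ with $z(h_i+r_+)=u(h_i+r_+)$ and natural boundary condition at $h_i+r$: its energy is automatically a lower bound for that of $u$, it solves the Euler--Lagrange ODE, and the comparison of $\psi=(z-\beta)^2$ with the explicit $\cosh$ solution gives genuine pointwise exponential decay \eqref{|z-v+|<exp}; the Modica inequality applied to $z$ (not to $u$) then recovers $I_4$ with the sharp constant and only an exponentially small loss. You also omit the dichotomy needed to justify staying in the quadratic regime (the paper's condition \eqref{eq:forrho2} handles the case where the minimizer exits the $\rho_1$-neighbourhood of $\beta$), and placing $\xi$ at distance $\geq r$ from $h_i-r$ is not guaranteed by \eqref{vstruct} when $h_i-h_{i-1}<3r$; both are fixable, but the passage to the constrained minimizer is not optional.
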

\begin{proof}
Fix $u\in H^1(a,b)$ satisfying \eqref{eq:u0continI}, $v$ as in \eqref{vstruct} satisfying \eqref{eq:u-v} and fix $A\in(0,r\sqrt{2\lambda})$. 
For $\rho_1>0$ such that $[\alpha-\rho_1,\beta+\rho_1]\subset I$, define 
\begin{equation}\label{eq:nu}
	\begin{aligned}
		\omega:=&\min\left\{\frac{2G''(\alpha)-\nu\rho_1}{\max_{|z-\alpha|\leq\rho_1}D^2(z)},\frac{2G''(\beta)-\nu\rho_1}{\max_{|z-\beta|\leq\rho_1}D^2(z)}\right\},\\
		\nu&:=\sup\left\{|G'''(x)|, \,\, x\in[\alpha-\rho_1,\beta+\rho_1]\right\}.
	\end{aligned}
\end{equation}
Notice that $\omega\to2\lambda$ as $\rho_1\to0^+$. 
Take $\hat r\in(0,r)$ and $\rho_1$ so small that 
\begin{equation}\label{eq:cond-A}
	A\leq(r-\hat r)\sqrt{\omega}, \qquad \qquad \rho_1 \max_{u\in[\alpha-\rho_1,\beta+\rho_1]}|D'(u)|\leq d,
\end{equation}
where $d>0$ is the minimum of $D$, see \eqref{eq:ass-D}. 
Then, choose $0<\rho_2 < \rho_1$  sufficiently small that
\begin{equation}\label{eq:forrho2}
\begin{aligned}
	\int_{\beta-\rho_1}^{\beta-\rho_2}\sqrt{2G(s)}D(s)\,ds&>\int_{\beta-\rho_2}^{\beta}\sqrt{2G(s)}D(s)\,ds,  \\
	\int_{\alpha+\rho_2}^{\alpha+\rho_1}\sqrt{2G(s)}D(s)\,ds&> \int_{\alpha}^{\alpha+\rho_2}\sqrt{2G(s)}D(s)\,ds.
	\end{aligned}
\end{equation}
The choices of the constants $\hat r, \rho_1,\rho_2$ will be clear later on the proof.

We focus our attention on $h_i$, one of the discontinuous points of $v$ and, to fix ideas, 
let $v(h_i-r)=\alpha$, $v(h_i+r)=\beta$, the other case being analogous.
We can choose $\delta>0$ so small in \eqref{eq:u-v} so that there exist $r_+$ and $r_-$ in $(0,\hat r)$ such that
\begin{equation}\label{2points}
	|u(h_i+r_+)-\beta|<\rho_2, \qquad \quad \mbox{ and } \qquad \quad |u(h_i-r_-)-\alpha|<\rho_2.
\end{equation}
Indeed, assume by contradiction that $|u-\beta|\geq\rho_2$ throughout $(h_i,h_i+\hat r)$; then
\begin{equation*}
	\delta\geq\|u-v\|_{{}_{L^1}}\geq\int_{h_i}^{h_i+\hat r}|u-v|\,dx\geq\hat r\rho_2,
\end{equation*}
and this leads to a contradiction if we choose $\delta\in(0,\hat r\rho_2)$.
Similarly, one can prove the existence of $r_-\in(0,\hat r)$ such that $|u(h_i-r_-)-\alpha|<\rho_2$.

Now, we consider the interval $(h_i-r,h_i+r)$ and claim that
\begin{equation}\label{eq:claim}
	\int_{h_i-r}^{h_i+r}\left\{\frac\e2[D(u)u_x]^2+\frac{G(u)}\e\right\}\,dx\geq\gamma_0-\tfrac{C}N\exp(-A/\varepsilon),
\end{equation}
for some $C>0$ independent on $\e$.
Observe that from Young inequality, it follows that for any $a\leq c<d\leq b$,
\begin{equation}\label{eq:ineq}
	\int_c^d\left\{\frac\e2[D(u)u_x]^2+\frac{G(u)}\e\right\}\,dx \geq \left|\int_{u(c)}^{u(d)}\sqrt{2G(s)}D(s)\,ds\right|.
\end{equation}
Hence, if $u(h_i+r_+)\geq\beta$ and $u(h_i-r_-)\leq\alpha$, then from \eqref{eq:ineq} we can conclude that
\begin{equation*}
	\int_{h_i-r_-}^{h_i+r_+}\left\{\frac\e2[D(u)u_x]^2+\frac{G(u)}\e\right\}\,dx\geq\gamma_0,
\end{equation*}
which implies \eqref{eq:claim}.
On the other hand, notice that in general we have
\begin{align}
	\int_{h_i-r}^{h_i+r}\left\{\frac\e2[D(u)u_x]^2+\frac{G(u)}\e\right\}\,dx & 
	\geq \int_{h_i+r_+}^{h_i+r}\left\{\frac\e2[D(u)u_x]^2+\frac{G(u)}\e\right\}\,dx\notag \\ 
	& \quad + \int_{h_i-r}^{h_i-r_-}\left\{\frac\e2[D(u)u_x]^2+\frac{G(u)}\e\right\}\,dx \notag \\
	& \quad +\int_{\alpha}^{\beta}\sqrt{2G(s)}D(s)\,ds\notag\\
	&\quad-\int_{\alpha}^{u(h_i-r_-)}\sqrt{2G(s)}D(s)\,ds \notag \\
	& \quad-\int_{u(h_i+r_+)}^{\beta}\sqrt{2G(s)}D(s)\,ds\notag \\
	&=:I_1+I_2+\gamma_0-I_3-I_4, \label{eq:Pe}
\end{align}
where we again used \eqref{eq:ineq}. 
Regarding $I_1$, assume that $\beta-\rho_2<u(h_i+r_+)<\beta$ and consider the unique minimizer $z:[h_i+r_+,h_i+r]\rightarrow\R$ 
of $I_1$ subject to the boundary condition $z(h_i+r_+)=u(h_i+r_+)$.
If the range of $z$ is not contained in the interval $(\beta-\rho_1,\beta+\rho_1)$, then from \eqref{eq:ineq}, it follows that
\begin{equation}\label{E>fi}
	\int_{h_i+r_+}^{h_i+r}\left\{\frac\e2[D(u)u_x]^2+\frac{G(u)}\e\right\}\,dx>\int_{u(h_i+r_+)}^{\beta}\sqrt{2G(s)}D(s)\,ds=I_4,
\end{equation}
by the choice of $r_+$ and $\rho_2$. 
Suppose, on the other hand, that the range of $z$ is contained in the interval $(\beta-\rho_1,\beta+\rho_1)$. 
Then, the Euler-Lagrange equation for $z$ is
\begin{align*}
	&\e D^2(z)z''=\e^{-1}G'(z)-\e D(z)D'(z)(z')^2, \quad \qquad x\in(h_i+r_+,h_i+r),\\
	&z(h_i+r_+)=u(h_i+r_+), \quad \qquad z'(h_i+r)=0.
\end{align*}
Denoting by $\psi(x):=(z(x)-\beta)^2$, we have $\psi'=2(z-\beta)z'$ and 
\begin{equation*}
	\psi''=2(z-\beta)z''+2(z')^2=\frac{2G'(z)}{\varepsilon^2 D^2(z)}(z-\beta)+2\left[\frac{D(z)-(z-\beta)D'(z)}{D(z)}\right](z')^2.
\end{equation*}
Since $|z(x)-\beta|\leq\rho_1$ for any $x\in[h_i+r_+,h_i+r]$, using Taylor's expansion and the assumptions \eqref{eq:ass-G} on $G$, we get
\begin{equation*}
	G'(z(x))=G''(\beta)(z(x)-\beta)+R,
\end{equation*}
where $|R|\leq\nu|z-\beta|^2/2$ with $\nu$ defined in \eqref{eq:nu}, and as a consequence
\begin{equation*}
	\psi''(x)\geq \frac{2G''(\beta)}{\varepsilon^2D^2(z)}(z(x)-\beta)^2-\frac{\nu\rho_1}{\varepsilon^2 D^2(z)}(z(x)-\beta)^2\geq\frac{\omega}{\e^2}\psi(x) \geq\frac{\mu^2}{\varepsilon^2}\psi(x),
\end{equation*}
where $\mu:=A/(r-\hat r)$ and we used \eqref{eq:nu}-\eqref{eq:cond-A}. 
Thus, $\psi$ satisfies
\begin{align*}
	\psi''(x)-\frac{\mu^2}{\varepsilon^2}\psi(x)\geq0, \quad \qquad x\in(h_i+r_+,h_i+r),\\
	\psi(h_i+r_+)=(u(h_i+r_+)-\beta)^2, \quad \qquad \psi'(h_i+r)=0.
\end{align*}
We compare $\psi$ with the solution $\hat \psi$ of
\begin{align*}
	\hat\psi''(x)-\frac{\mu^2}{\varepsilon^2}\hat\psi(x)=0, \quad \qquad x\in(h_i+r_+,h_i+r),\\
	\hat\psi(h_i+r_+)=(u(h_i+r_+)-\beta)^2, \quad \qquad \hat\psi'(h_i+r)=0,
\end{align*}
which can be explicitly calculated to be
\begin{equation*}
	\hat\psi(x)=\frac{(u(h_i+r_+)-\beta)^2}{\cosh\left[\frac\mu\varepsilon(r-r_+)\right]}\cosh\left[\frac\mu\varepsilon(x-(h_i+r))\right].
\end{equation*}
By the maximum principle, $\psi(x)\leq\hat\psi(x)$ so, in particular,
\begin{equation*}
	\psi(h_i+r)\leq\frac{(u(h_i+r_+)-\beta)^2}{\cosh\left[\frac\mu\varepsilon(r-r_+)\right]}\leq2\exp(-A/\varepsilon)(u(h_i+r_+)-\beta)^2.
\end{equation*}
Then, we have 
\begin{equation}\label{|z-v+|<exp}
	|z(h_i+r)-\beta|\leq\sqrt2\exp(-A/2\varepsilon)\rho_2.
\end{equation}
Now, by using Taylor's expansion for $G(s)$, we obtain
\begin{equation*}
	G(s)\leq(s-\beta)^2\left(\frac{G''(\beta)}2+\frac{o(|s-\beta|^2)}{|s-\beta|^2}\right).
\end{equation*}
Therefore, for $s$ sufficiently close to $\beta$ we have
\begin{equation}\label{W-quadratic}
	0\leq G(s)\leq\Lambda(s-\beta)^2.
\end{equation}
Using \eqref{|z-v+|<exp} and \eqref{W-quadratic}, we obtain
\begin{align}
	\left|\int_{z(h_i+r)}^{\beta}\sqrt{2G(s)}D(s)\,ds\right|&\leq\sqrt{\Lambda/2}(z(h_i+r)-\beta)^2\notag \\
	&\leq\sqrt{2\Lambda}\,\rho_2^2\,\exp(-A/\varepsilon). \label{fi<exp}
\end{align}
From \eqref{eq:ineq}-\eqref{fi<exp} it follows that, for some constant $C>0$, 
\begin{align}
	\int_{h_i+r_+}^{h_i+r}\left\{\frac\e2[D(z)z_x]^2+\frac{G(z)}\e\right\}\,dx &\geq \left|\int_{z(h_i+r_+)}^{\beta}\sqrt{2G(s)}D(s)\,ds\,-\right.\nonumber \\
	&\qquad \qquad\left.\int_{z(h_i+r)}^{\beta}\sqrt{2G(s)}D(s)\,ds\right| \nonumber\\
	& \geq I_4-\tfrac{C}{2N}\exp(-A/\varepsilon). \label{E>fi-exp}
\end{align}
Combining \eqref{E>fi} and \eqref{E>fi-exp}, we get that the constrained minimizer $z$ of the proposed variational problem satisfies
\begin{equation*}	
	\int_{h_i+r_+}^{h_i+r}\left\{\frac\e2[D(z)z_x]^2+\frac{G(z)}\e\right\}\,dx \geq I_4-\tfrac{C}{2N}\exp(-A/\varepsilon).
\end{equation*}
The restriction of $u$ to $[h_i+r_+,h_i+r]$ is an admissible function, so it must satisfy the same estimate and we have
\begin{equation}\label{eq:I1}
	I_1\geq I_4-\tfrac{C}{2N}\exp(-A/\varepsilon).
\end{equation}
The term $I_2$ on the right hand side of \eqref{eq:Pe} is estimated similarly by analyzing  the interval $[h_i-r,h_i-r_-]$ 
and using the second condition of \eqref{eq:forrho2} to obtain the corresponding inequality \eqref{E>fi}.
The obtained lower bound reads:
\begin{equation}\label{eq:I2}	
	I_2\geq I_3-\tfrac{C}{2N}\exp(-A/\varepsilon).
\end{equation}
Finally, by substituting \eqref{eq:I1} and \eqref{eq:I2} in \eqref{eq:Pe}, we deduce \eqref{eq:claim}.
Summing up all of these estimates for $i=1, \dots, N$, namely for all transition points, we end up with
\begin{equation*}
	E_\varepsilon[u]\geq\sum_{i=1}^N\int_{h_i-r}^{h_i+r}\left\{\frac\e2[D(u)u_x]^2+\frac{G(u)}\e\right\}\,dx\geq N\gamma_0-C\exp(-A/\varepsilon),
\end{equation*}
and the proof is complete.
\end{proof}
Thanks to the generalized effective energy of Ginzburg-Landau type \eqref{eq:energy}, 
the dissipative estimate \eqref{eq:energy-var} and the lower bound in Proposition \ref{prop:lower}, 
we can apply the energy approach introduced in \cite{BrKo90} and we can proceed as in \cite{Grnt95}, \cite{Fol19}, \cite{FLM19}.
\begin{prop}\label{prop:L2-norm}
Assume that $f,D\in C^2(I)$ satisfy \eqref{eq:ass-D}-\eqref{eq:ass-f}-\eqref{eq:ass-int0}-\eqref{eq:ass-int1}, 
and consider the solution $u^\e$ to \eqref{eq:D-model}-\eqref{eq:Neu}-\eqref{eq:initial} 
with initial datum $u_0^{\varepsilon}$ satisfying \eqref{eq:u0continI}, \eqref{eq:ass-u0} and \eqref{eq:energy-ini}.
Then, there exist positive constants $\varepsilon_0, C_1, C_2>0$ (independent on $\varepsilon$) such that
\begin{equation}\label{L2-norm}
	\int_0^{C_1\varepsilon^{-1}\exp(A/\varepsilon)}\|u_t^\varepsilon\|^2_{{}_{L^2}}dt\leq C_2\varepsilon\exp(-A/\varepsilon),
\end{equation}
for all $\varepsilon\in(0,\varepsilon_0)$.
\end{prop}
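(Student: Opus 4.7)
The plan is to combine the dissipation identity from Lemma \ref{lem:energy}, in the weakened form \eqref{eq:energy-var}, with the sharp lower bound of Proposition \ref{prop:lower}, closing the argument by a standard continuity (bootstrap) device. The upper bound \eqref{eq:energy-ini} on the initial energy, together with the lower bound \eqref{eq:lower}, controls the total dissipation $E_\e[u^\e](0) - E_\e[u^\e](t)$, which by \eqref{eq:energy-var} bounds $\int_0^t \|u_t^\e\|_{L^2}^2\,ds$. The delicate point is that Proposition \ref{prop:lower} is only available while $\|u^\e(\cdot,t)-v\|_{L^1}\leq \delta$, and this proximity must be maintained throughout the exponentially long interval $[0,C_1\e^{-1}\exp(A/\e)]$.

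Concretely, I would introduce the first-exit time
\begin{equation*}
	\tau_\e := \sup\Bigl\{ t>0 \,:\, \|u^\e(\cdot,s) - v\|_{L^1} \leq \delta \;\text{ for all } s\in[0,t]\Bigr\},
\end{equation*}
where $\delta>0$ is the constant of Proposition \ref{prop:lower}. By \eqref{eq:ass-u0} and $L^1$-continuity in time, $\tau_\e>0$ and in fact $\|u_0^\e - v\|_{L^1} < \delta/2$ for $\e$ sufficiently small. For every $t\in[0,\tau_\e]$, Proposition \ref{prop:lower} applied to $u^\e(\cdot,t)$ gives $E_\e[u^\e](t) \geq N\gamma_0 - C\exp(-A/\e)$; combined with \eqref{eq:energy-ini} and inserted into \eqref{eq:energy-var}, this yields
\begin{equation*}
	\int_0^t \|u_t^\e\|_{L^2}^2\,ds \;\leq\; \frac{2C}{d}\,\e \exp(-A/\e), \qquad t\in[0,\tau_\e].
\end{equation*}

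Next I would transfer this $L^2$ bound into an $L^1$ displacement bound via Cauchy-Schwarz in space and in time:
\begin{equation*}
	\|u^\e(\cdot,t) - u_0^\e\|_{L^1} \;\leq\; \sqrt{b-a}\int_0^t \|u_t^\e\|_{L^2}\,ds \;\leq\; \sqrt{(b-a)\,t}\left(\int_0^t \|u_t^\e\|_{L^2}^2\,ds\right)^{1/2}.
\end{equation*}
For $t\leq C_1 \e^{-1}\exp(A/\e)$ the $\e$-dependent factors on the right cancel exactly, leaving the $\e$-independent quantity $\sqrt{2CC_1(b-a)/d}$. Choosing $C_1$ small enough that this is strictly less than $\delta/2$, the triangle inequality gives $\|u^\e(\cdot,t)-v\|_{L^1} < \delta$ for all $t\leq\min\{\tau_\e, C_1\e^{-1}\exp(A/\e)\}$. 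A standard continuity argument (if $\tau_\e < C_1\e^{-1}\exp(A/\e)$, then the supremum would be attained with $L^1$-distance equal to $\delta$, contradicting the strict bound just obtained) rules out $\tau_\e$ being smaller than $C_1\e^{-1}\exp(A/\e)$, so the dissipation estimate extends to the full interval, proving \eqref{L2-norm} with $C_2 = 2C/d$.

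The main obstacle is precisely the circular dependence between the three ingredients: the lower energy bound needs $u^\e$ close to $v$, the $L^2$-dissipation bound needs the lower energy bound, and keeping $u^\e$ close to $v$ needs the $L^2$ bound. The crucial observation that breaks the circle is that, on the time scale $t=\mathcal{O}(\e^{-1}\exp(A/\e))$, the $\e$-dependent factors in the Cauchy-Schwarz estimate of $\|u^\e(\cdot,t) - u_0^\e\|_{L^1}$ cancel to give an $\e$-independent displacement bound proportional to $\sqrt{C_1}$, allowing the bootstrap to close by a single free parameter choice.
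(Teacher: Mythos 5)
Your proposal is correct and follows essentially the same route as the paper: the dissipation inequality \eqref{eq:energy-var} plus the lower bound of Proposition \ref{prop:lower} control the total dissipation, and a H\"older/Cauchy--Schwarz estimate converts the $L^2$-in-time bound into an $L^1$ displacement bound whose $\e$-dependent factors cancel on the time scale $C_1\e^{-1}\exp(A/\e)$, closing the bootstrap. The only cosmetic difference is that you run the continuity argument on the first-exit time from the $\delta$-ball around $v$, whereas the paper defines $\hat T$ as the first time the accumulated variation $\int_0^{\hat T}\|u_t^\e\|_{L^1}\,dt$ reaches $\delta/2$; these are equivalent devices.
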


\begin{proof}
Let $\varepsilon_0>0$ so small that for all $\varepsilon\in(0,\varepsilon_0)$, \eqref{eq:energy-ini} holds and 
\begin{equation}\label{1/2delta}
	\|u_0^\varepsilon-v\|_{{}_{L^1}}\leq\frac12\delta,
\end{equation}
where $\delta$ is the constant of Proposition \ref{prop:lower}. 
Let $\hat T>0$; we claim that if
\begin{equation}\label{claim1}
	\int_0^{\hat T}\|u_t^\varepsilon\|_{{}_{L^1}}dt\leq\frac12\delta,
\end{equation}
then there exists $C>0$ such that
\begin{equation}\label{claim2}
	E_\varepsilon[u^\varepsilon](\hat T)\geq N\gamma_0-C\exp(-A/\varepsilon).
\end{equation}
Indeed, inequality \eqref{claim2} follows from Proposition \ref{prop:lower} if $\|u^\varepsilon(\cdot,\hat T)-v\|_{{}_{L^1}}\leq\delta$.
By using triangle inequality, \eqref{1/2delta} and \eqref{claim1}, we obtain
\begin{equation*}
	\|u^\varepsilon(\cdot,\hat T)-v\|_{{}_{L^1}}\leq\|u^\varepsilon(\cdot,\hat T)-u_0^\varepsilon\|_{{}_{L^1}}+\|u_0^\varepsilon-v\|_{{}_{L^1}}
	\leq\int_0^{\hat T}\|u_t^\varepsilon\|_{{}_{L^1}}+\frac12\delta\leq\delta.
\end{equation*}
By using the inequalities \eqref{eq:energy-var}, \eqref{eq:energy-ini} and \eqref{claim2}, we deduce
\begin{equation}\label{L2-norm-Teps}
	\int_0^{\hat T}\|u_t^\varepsilon\|^2_{{}_{L^2}}dt\leq\frac{\e}{d}\left(E_\e[u_0^\e]-E_\e[u_\e](\hat T)\right)\leq C_2\e\exp(-A/\varepsilon).
\end{equation}
It remains to prove that inequality \eqref{claim1} holds for $\hat T\geq C_1\e^{-1}\exp(A/\varepsilon)$.
If 
\begin{equation*}
	\int_0^{+\infty}\|u_t^\varepsilon\|_{{}_{L^1}}dt\leq\frac12\delta,
\end{equation*}
there is nothing to prove. 
Otherwise, choose $\hat T$ such that
\begin{equation*}
	\int_0^{\hat T}\|u_t^\varepsilon\|_{{}_{L^1}}dt=\frac12\delta.
\end{equation*}
Using H\"older's inequality and \eqref{L2-norm-Teps}, we infer
\begin{equation*}
	\frac12\delta\leq[\hat T(b-a)]^{1/2}\biggl(\int_0^{\hat T}\|u_t^\varepsilon\|^2_{{}_{L^2}}dt\biggr)^{1/2}\leq
	\left[\hat T(b-a)C_2\varepsilon\exp(-A/\varepsilon)\right]^{1/2}.
\end{equation*}
It follows that there exists $C_1>0$ such that
\begin{equation*}
	\hat T\geq C_1\varepsilon^{-1}\exp(A/\varepsilon),
\end{equation*}
and the proof is complete.
\end{proof}

Now, we have all the tools to prove \eqref{eq:limit}.
\begin{proof}[Proof of Theorem \ref{thm:main}]
Triangle inequality gives
\begin{equation}\label{trianglebar}
	\|u^\varepsilon(\cdot,t)-v\|_{{}_{L^1}}\leq\|u^\varepsilon(\cdot,t)-u_0^\varepsilon\|_{{}_{L^1}}+\|u_0^\varepsilon-v\|_{{}_{L^1}},
\end{equation}
for all $t\in[0,m\exp(A/\varepsilon)]$. 
The last term of inequality \eqref{trianglebar} tends to $0$ by assumption \eqref{eq:ass-u0}.
Regarding the first term, take $\varepsilon$ so small that $C_1\varepsilon^{-1}\geq m$;
thus we can apply Proposition \ref{prop:L2-norm} and by using H\"older's inequality and \eqref{L2-norm}, we infer
\begin{equation*}
	\sup_{0\leq t\leq m\exp(A/\varepsilon)}\|u^\e(\cdot,t)-u^\e_0\|_{{}_{L^1}}\leq\int_0^{m\exp(A/\varepsilon)}\|u_t^\e(\cdot,t)\|_{{}_{L^1}}\,dt\leq C\sqrt\e,
\end{equation*}		
for all $t\in[0,m\exp(A/\varepsilon)]$. Hence \eqref{eq:limit} follows.
\end{proof}

\section{Metastable patterns and speed of the layers}\label{sec:exist}
The goal of this section is to construct a family of functions $u^\e$ having a $N$-transitions layer structure (existence of metastable patterns) 
and to give an estimate on the velocity of the transition points $h_1,\ldots,h_N$;
more precisely, we will show that the layers move with an exponentially small speed as $\e\to0$.
\subsection{Existence of metastable patterns}
In order to construct a family of functions $u^\e$ satisfying \eqref{eq:ass-u0}-\eqref{eq:energy-ini}, 
we will use a standing wave solution to \eqref{eq:D-model}, that is the solution $\Phi_\e=\Phi_\e(x)$ to the boundary value problem
\begin{equation}\label{eq:Fi}
	\begin{cases}
		\e^2\left(D(\Phi_\e)\Phi'_\e\right)'-f(\Phi_\e)=0,\qquad\mbox{in } (-\infty,+\infty),\\
		\displaystyle\lim_{x\to-\infty}\Phi_\e(x)=\alpha, \qquad\lim_{x\to+\infty}\Phi_\e(x)=\beta,\\
		\displaystyle\Phi_\e(0)=\frac{\alpha+\beta}{2},
	\end{cases}
\end{equation}
where $D,f$ satisfy \eqref{eq:ass-D}-\eqref{eq:ass-f}-\eqref{eq:ass-int0}-\eqref{eq:ass-int1}.
\begin{prop}\label{prop:ex-met}
Fix a piecewise function $v$ as in \eqref{vstruct} and assume that $f,D\in C^2(I)$ satisfy \eqref{eq:ass-D}-\eqref{eq:ass-f}-\eqref{eq:ass-int0}-\eqref{eq:ass-int1}.
Then, there exists a function $u^\e\in H^1(a,b)$ taking values in $(\alpha,\beta)$ and satisfying \eqref{eq:ass-u0}, \eqref{eq:energy-ini} and
\begin{equation}\label{eq:lim-energy-v}
	\lim_{\e\to0^+} E_\e[u^\e]=N\gamma_0,
\end{equation}
where $\gamma_0$ is defined in \eqref{eq:gamma}.
\end{prop}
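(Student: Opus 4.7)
The plan is to build $u^\e$ by pasting together translated and reflected copies of the standing wave $\Phi_\e$ from \eqref{eq:Fi}, one heteroclinic connection centered at each jump point $h_i$ of $v$.

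\emph{Step 1 (standing wave analysis).} First I would derive a first integral for \eqref{eq:Fi}. Multiplying the ODE by $D(\Phi_\e)\Phi_\e'$ and integrating over $(-\infty,x]$, using $\Phi_\e(-\infty)=\alpha$, $\Phi_\e'(-\infty)=0$ and $G(\alpha)=0$, one obtains $\tfrac{\e^2}{2}[D(\Phi_\e)\Phi_\e']^2 = G(\Phi_\e)$. A phase-plane argument shows that $\Phi_\e$ is strictly increasing from $\alpha$ to $\beta$, so $\e D(\Phi_\e)\Phi_\e' = \sqrt{2G(\Phi_\e)}$. Since $G''(\alpha)=f'(\alpha)D(\alpha)>0$ and likewise at $\beta$, linearizing this ODE at the equilibria (or, equivalently, integrating $\sqrt{2G(s)} \sim \sqrt{G''(\beta)}|s-\beta|$ near $\beta$) yields the exponential decay
\[
|\Phi_\e(x)-\beta|\leq C\exp(-\sqrt{\lambda}\,x/\e),\ x\geq 0;\qquad |\Phi_\e(x)-\alpha|\leq C\exp(\sqrt{\lambda}\,x/\e),\ x\leq 0,
\]
with $\lambda$ as in \eqref{eq:lambda}. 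Note the equipartition identity $\tfrac{\e}{2}[D(\Phi_\e)\Phi_\e']^2 = G(\Phi_\e)/\e$, which yields
\[
\int_{\R}\left\{\tfrac{\e}{2}[D(\Phi_\e)\Phi_\e']^2+\tfrac{G(\Phi_\e)}{\e}\right\}dx = \int_\alpha^\beta \sqrt{2G(s)}\,D(s)\,ds = \gamma_0.
\]

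\emph{Step 2 (construction).} Set $\sigma_i=+1$ if $v$ jumps from $\alpha$ to $\beta$ at $h_i$ and $\sigma_i=-1$ otherwise, and choose midpoints $m_0:=a$, $m_N:=b$, $m_i:=(h_i+h_{i+1})/2$ for $1\leq i\leq N-1$, so that $\min_i(m_i-h_i,h_{i+1}-m_i)\geq r$. Define a preliminary $\tilde u^\e(x):=\Phi_\e(\sigma_i(x-h_i))$ on each $[m_{i-1},m_i]$. By Step~1, at each interior midpoint $m_i$ the left and right traces differ by at most $2C\exp(-\sqrt{\lambda}\,r/\e)$ (both are within that distance of the common constant value $v|_{(h_i,h_{i+1})}$), and analogous exponential closeness to $\alpha$ or $\beta$ holds at $a$ and $b$. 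A standard mollification, or linear interpolation on an exponentially small window, then produces $u^\e\in H^1(a,b)$ with range in $(\alpha,\beta)\subset I$; this modification introduces $L^1$- and energy-errors of order $\exp(-\sqrt{\lambda}\,r/\e)$.

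\emph{Step 3 (verification of \eqref{eq:ass-u0}, \eqref{eq:energy-ini}, \eqref{eq:lim-energy-v}).} The bound $|\tilde u^\e(x)-v(x)|\leq C\exp(-\sqrt{\lambda}|x-h_i|/\e)$ on each piece gives $\|u^\e-v\|_{L^1}\to 0$. For the energy, the equipartition identity of Step~1 shows that on each $[m_{i-1},m_i]$,
\[
\int_{m_{i-1}}^{m_i}\!\left\{\tfrac{\e}{2}[D(\tilde u^\e)\tilde u^\e_x]^2+\tfrac{G(\tilde u^\e)}{\e}\right\}dx = \int_{\Phi_\e(\sigma_i(m_{i-1}-h_i))}^{\Phi_\e(\sigma_i(m_i-h_i))}\sqrt{2G(s)}\,D(s)\,ds.
\]
Since each endpoint is within $O(\exp(-\sqrt{\lambda}\,r/\e))$ of $\alpha$ or $\beta$ and $\sqrt{2G(s)}=O(|s-\alpha|+|s-\beta|)$ near the equilibria, the right-hand side equals $\gamma_0-O(\exp(-2\sqrt{\lambda}\,r/\e))$. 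Summing over $i=1,\ldots,N$ and absorbing the gluing correction gives $|E_\e[u^\e]-N\gamma_0|\leq C\exp(-A/\e)$ for every $A<r\sqrt{2\lambda}$, because $2\sqrt{\lambda}\,r>\sqrt{2\lambda}\,r>A$. Both \eqref{eq:energy-ini} and \eqref{eq:lim-energy-v} follow.

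\emph{Main obstacle.} The most delicate point is establishing the sharp exponential decay rate $\sqrt{\lambda}/\e$ of $|\Phi_\e-\alpha|$ and $|\Phi_\e-\beta|$, which is exactly what allows the energy tails to decay faster than $\exp(-\sqrt{2\lambda}\,r/\e)$. This requires either explicit integration of $\int ds/\sqrt{2G(s)}$ near the rest points or a Gronwall argument on the linearized first-integral ODE; either way one must control the extra term $\e D(\Phi_\e)D'(\Phi_\e)(\Phi_\e')^2$ coming from $u$-dependence of the diffusivity, using $(D^2)'/D^2$ bounded near the equilibria. A secondary technical point is to ensure that the $H^1$-smoothing at the midpoints and at $a,b$ does not destroy the exponential smallness, but this is routine since the quantities being patched are already $O(\exp(-\sqrt{\lambda}\,r/\e))$.
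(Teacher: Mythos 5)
Your proof is correct, and the core construction is the same as the paper's: derive the first integral $\e D(\Phi_\e)\Phi_\e'=\sqrt{2G(\Phi_\e)}$ for the standing wave \eqref{eq:Fi} and paste reflected translates of $\Phi_\e$ on the midpoint intervals. Two remarks on where you diverge. First, your Step 2 mollification is superfluous: at an interior midpoint $m_i=(h_i+h_{i+1})/2$ the left and right traces are \emph{both} equal to $\Phi_\e\bigl((h_{i+1}-h_i)/2\bigr)$ (same distance to the two neighbouring transition points, and the orientations match), so the pasted function is already continuous and lies in $H^1(a,b)$ with no correction and no gluing error. Second, and more substantively, your endgame differs from the paper's. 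The equipartition identity plus the change of variables $s=\Phi_\e$ immediately gives $E_\e[u^\e]=\sum_i\int\sqrt{2G(s)}D(s)\,ds$ over subintervals of $[\alpha,\beta]$, hence the \emph{strict} upper bound $E_\e[u^\e]<N\gamma_0$, which already yields \eqref{eq:energy-ini} with no decay analysis whatsoever; the paper then obtains \eqref{eq:lim-energy-v} by sandwiching this against the lower bound of Proposition \ref{prop:lower}. You instead prove the matching lower bound by hand, via the exponential decay $|\Phi_\e(x)-\beta|\le C\exp(-\sqrt{\lambda}|x|/\e)$ and the quadratic vanishing of $G$ at the wells, so that the missing tails of the integral are $O(\exp(-2\sqrt{\lambda}r/\e))=o(\exp(-A/\e))$. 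Your route is self-contained and quantifies the gap $N\gamma_0-E_\e[u^\e]$ explicitly, at the cost of the decay estimates you rightly flag as the delicate point (the rate is really $\sqrt{\lambda}-\eta$ for arbitrarily small $\eta$, which still suffices since $2\sqrt{\lambda}r>\sqrt{2\lambda}\,r>A$); the paper's route avoids all of this by reusing Proposition \ref{prop:lower}, which has already been proved at that stage.
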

\begin{proof}
First of all, we prove that if $f,D\in C^2(I)$ satisfy \eqref{eq:ass-D}-\eqref{eq:ass-f}-\eqref{eq:ass-int0}-\eqref{eq:ass-int1},
then there exists a unique increasing solution to \eqref{eq:Fi}.
Multiplying by $D(\Phi_\e)\Phi'_\e=D(\Phi_\e(x))\Phi'_\e(x)$ the first equation of \eqref{eq:Fi}, we deduce
\begin{equation*}
	\left\{\frac{\e^2}{2}\left[D(\Phi_\e)\Phi'_\e\right]^2-G(\Phi_\e)\right\}'=0, \qquad\qquad \mbox{in }\, (-\infty,+\infty),
\end{equation*}
where $G$ is defined in \eqref{eq:G} and, from the second equation of \eqref{eq:Fi} and \eqref{eq:ass-G}, 
it follows that the profile $\Phi_\e$ satisfies 	
\begin{equation}\label{eq:FI-first}
	\begin{cases}
		\e D(\Phi_\e)\Phi'_\e=\sqrt{2G(\Phi_\e)},\\
		\Phi_\e(0)=\displaystyle\frac{\alpha+\beta}{2}.
	\end{cases}
\end{equation}
Therefore, the fact that $G$ satisfies \eqref{eq:ass-G} and the strictly positiveness of $D$ imply that 
there exists a unique solution to \eqref{eq:FI-first} which is increasing and implicitly defined by
\begin{equation}\label{eq:Fi-implicit}
	\int_{\frac{\alpha+\beta}{2}}^{\Phi_\e(x)}\displaystyle\frac{D(s)}{\sqrt{2G(s)}}\,ds=\frac{x}{\e}.
\end{equation} 
Observe that 
\begin{equation*}
	\lim_{\e\to0}\Phi_\e(x)=\begin{cases}
	\alpha, \qquad & x<0,\\	
	\frac{\alpha+\beta}{2}, &x=0,\\
	\beta, & x>0.
	\end{cases}
\end{equation*}
Now, we use the profile $\Phi_\e$ to construct a family of functions satisfying \eqref{eq:ass-u0} and \eqref{eq:energy-ini}.
Fix $N\in\mathbb{N}$ and $N$ transition points $a<h_1<h_2<\dots<h_N<b$, and denote the middle points by
\begin{equation*}
	m_1:=a, \qquad \quad m_j:=\frac{h_{j-1}+h_j}{2}, \quad j=2,\dots,N, \qquad \quad m_{N+1}:=b.
\end{equation*}
Define
\begin{equation}\label{eq:translayer}
	u^\e(x):=\Phi_\e\left((-1)^j(x-h_j)\right),  \qquad x\in[m_j,m_{j+1}],  \qquad j=1,\dots N,
\end{equation}
where $\Phi_\e$ is the solution to \eqref{eq:Fi}.
Notice that $u^\e(h_j)=\frac{\alpha+\beta}{2}$, for $j=1,\dots,N$ and for definiteness we choose $u^\e(a)<0$ (the case $u^\e(a)>0$ is analogous).
It is easy to check that $u^\e\in H^1(a,b)$, $\alpha<u^\e<\beta$ and \eqref{eq:ass-u0} holds; 
let us prove that $u^\e$ satisfies \eqref{eq:energy-ini}.
From the definitions of $E_\e$ \eqref{eq:energy}, $u^\e$ \eqref{eq:translayer} and \eqref{eq:FI-first}, we obtain
\begin{align*}
	E_\e[u^\e]&=\sum_{j=1}^{N}\int_{m_j}^{m_{j+1}}\left[\frac\e2[D(u^\e)u^\e_x]^2+\frac{G(u^\e)}\e\right]\,dx=\sum_{j=1}^{N}\int_{m_j}^{m_{j+1}}\frac{2G(\Phi_\e)}\e\,dx\\
	&=\sum_{j=1}^{N}\int_{m_j}^{m_{j+1}}\sqrt{2G(\Phi_\e)}D(\Phi_\e)|\Phi'_\e|\,dx<N\gamma_0,
	\end{align*}
where $\gamma_0$ is defined in \eqref{eq:gamma}, and then $u^\e$ satisfies \eqref{eq:energy-ini}.
Moreover, choosing $\e$ so small that assumption \eqref{eq:u-v} is satisfied and applying Proposition \ref{prop:lower}, we infer
\begin{equation*}
	N\gamma_0-C\exp(-A/\varepsilon)\leq E_\e[u^\e]<N\gamma_0.
\end{equation*}
Passing to the limit as $\e\to0^+$, we end up with \eqref{eq:lim-energy-v} and the proof is complete.
\end{proof}
In what follows, we discuss the role of assumptions \eqref{eq:ass-D}-\eqref{eq:ass-f} on the functions $f,D$ in the proof of Proposition \ref{prop:ex-met}.
In particular, we discuss their role in \eqref{eq:FI-first} and we see what happens when they are not satisfied, that is when either $f'(\alpha)f'(\beta)=0$ or $D(\alpha)D(\beta)=0$.

First of all, notice that the assumptions \eqref{eq:ass-D}, \eqref{eq:ass-f} imply the exponential decay
\begin{equation}\label{eq:exp-decay}
	\begin{aligned}
		&|\Phi_\e(x)-\alpha|\leq c_1e^{c_2 x}, \qquad\qquad&\mbox{ as }\, x\to-\infty,\\
		&|\Phi_\e(x)-\beta|\leq c_1e^{-c_2 x}, \qquad\qquad&\mbox{ as }\, x\to+\infty,
	\end{aligned}
\end{equation}
for some constants $c_1,c_2>0$ (depending on $f$ and $D$). 
If $D$ is strictly positive, but $f'$ is \emph{degenerate} at $\alpha$ or $\beta$, that is $f'(\alpha)f'(\beta)=0$, 
we have the existence of a unique solution for \eqref{eq:Fi}, but we do not have the exponential decay \eqref{eq:exp-decay}.
We will see in Section \ref{sec:num} that if $f'(\alpha)f'(\beta)=0$, the numerical solutions do not exhibit exponentially slow motion, cfr. Figure \ref{fig:f-deg}.

On the other hand, in the case assumptions \eqref{eq:ass-f} are satisfied, but $D$ is \emph{degenerate} at $\alpha$ or $\beta$, that is, 
$D(\alpha)D(\beta)=0$ the situation drastically changes.
Indeed, for definiteness assume $D(\beta)=0$ and $D(s)\sim(s-\beta)^{2n}$ as $(s-\beta)\to0$, for some $n\in\mathbb{N}$. 
Using the expansion $G(s)\sim(s-\beta)^{2n+2}$, we deduce that the integral
\begin{equation}\label{eq:sharp-prof}
	\int_{\frac{\alpha+\beta}{2}}^\beta\frac{D(s)}{\sqrt{2G(s)}}\,ds<+\infty,
\end{equation}
and as a consequence, there exists $\bar x>0$ such that $\Phi_\e(\bar x)=\beta$.
In Section \ref{sec:num}, we consider a numerical solution in the case \eqref{eq:sharp-prof}
and we observe a \emph{sharp} connection between the two stable points,
which does not exhibit exponentially slow motion, cfr. Figure \ref{fig:D-deg}.

Finally, we remark that in order to have a strictly monotone solution to \eqref{eq:Fi} in the case $D(s)\sim(s-\beta)^{2n}$ as $(s-\beta)\to0$,
the potential $G$ must satisfy $G(s)\sim(s-\beta)^{2m}$ as $(s-\beta)\to0$, for some $m\geq2n+1$,
meaning that $f(\beta)=f'(\beta)=\cdots=f^{(2m-2n-2)}(\beta)=0$ and $f^{(2m-2n-1)}(\beta)>0$.
In particular, we have the exponential decay \eqref{eq:exp-decay} if and only if $m=2n+1$, 
that is $D(s)\sim(s-\beta)^{2n}$ and $f(s)\sim(s-\beta)^{2n+1}$ as  $(s-\beta)\to0$.

\subsection{Layers speed}
Theorem \ref{thm:main} and Proposition \ref{prop:ex-met} provide existence and persistence of metastable states 
with a $N$-\emph{transition layer structure} for the IBVP \eqref{eq:D-model}-\eqref{eq:Neu}-\eqref{eq:initial}.
The next goal is to establish an upper bound on the velocity of the transition points.
To do this, fix $v$ as in \eqref{vstruct} and define its {\it interface} $I[v]$ as
\begin{equation*}
	I[v]:=\{h_1,h_2,\ldots,h_N\}.
\end{equation*}
For an arbitrary function $u:[a,b]\rightarrow I$ and an arbitrary closed subset $K\subset I\backslash\{\alpha,\beta\}$,
the {\it interface} $I_K[u]$ is defined by
\begin{equation*}
	I_K[u]:=u^{-1}(K).
\end{equation*}
Finally, we recall that for any $X,Y\subset\mathbb{R}$ the {\it Hausdorff distance} $d(X,Y)$ between $X$ and $Y$ is defined by 
\begin{equation*}
	d(X,Y):=\max\biggl\{\sup_{x\in X}d(x,Y),\,\sup_{y\in Y}d(y,X)\biggr\},
\end{equation*}
where $d(x,Y):=\inf\{|y-x|: y\in Y\}$. 

The following result is purely variational in character and states that, if a function $u\in H^1(a,b)$  satisfies \eqref{eq:u0continI}, 
it is close to $v$ in $L^1$ and $E_\varepsilon[u]$ exceeds of a small quantity the minimum energy to have $N$ transitions, then 
the distance between the interfaces $I_K[u]$ and $I_K[v]$ is small.  
\begin{lem}\label{lem:interface}
Assume that $D\in C^1(I)$ satisfies \eqref{eq:ass-D} and that $G\in C^3(I)$ satisfies \eqref{eq:ass-G}. 
Given $\delta_1\in(0,r)$ and a closed subset $K\subset I\backslash\{\alpha,\beta\}$, 
there exist positive constants $\hat\delta,\varepsilon_0$ (independent on $\e$) and $M>0$ such that for any $u\in H^1(a,b)$ satisfying \eqref{eq:u0continI} and
\begin{equation}\label{eq:lem-interf}
	\|u-v\|_{{}_{L^1}}<\hat\delta \qquad \quad \mbox{ and } \qquad \quad E_\varepsilon[u]\leq N\gamma_0+M,
\end{equation}
for all $\varepsilon\in(0,\varepsilon_0)$, we have
\begin{equation}\label{lem:d-interfaces}
	d(I_K[u], I[v])<\tfrac12\delta_1.
\end{equation}
\end{lem}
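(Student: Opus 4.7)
The plan is to argue by contradiction, using Proposition \ref{prop:lower} together with a \emph{local} Young-type energy bound to extract an extra, $\varepsilon$-independent positive contribution whenever the interface fails to match. Suppose the conclusion is false: then there exist sequences $\varepsilon_k\to 0^+$, $\hat\delta_k\to 0^+$, $M_k\to 0^+$ and functions $u_k\in H^1(a,b)$ satisfying \eqref{eq:u0continI} and \eqref{eq:lem-interf} with these parameters, yet $d(I_K[u_k], I[v])\ge\tfrac12\delta_1$. After passing to subsequences, two cases arise: (a) there exist $x_k\in I_K[u_k]$ with $d(x_k, I[v])\ge\tfrac12\delta_1$ and $x_k\to x^*$; or (b) a fixed index $j$ satisfies $d(h_j, I_K[u_k])\ge\tfrac12\delta_1$ for all $k$.

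For case (a), the key observation is that $v$ is constant (say $v\equiv\alpha$) on the neighborhood $W:=(x^*-\sigma, x^*+\sigma)$ with $\sigma:=\tfrac14\delta_1$, and since $\delta_1<r$ the interval $W$ is disjoint from all the intervals $(h_i-r,h_i+r)$ used in the proof of Proposition \ref{prop:lower}. Since $K$ is closed in $I\setminus\{\alpha,\beta\}$, the value $u_k(x_k)\in K$ is bounded away from $\alpha$ by some constant $\eta_K>0$. Using $\|u_k-v\|_{L^1}<\hat\delta_k\to 0$ and Chebyshev's inequality, I would locate points $y_k^-\in(x^*-\sigma,x_k)$ and $y_k^+\in(x_k,x^*+\sigma)$ with $|u_k(y_k^\pm)-\alpha|\to 0$. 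Applying the Young-type inequality \eqref{eq:ineq} separately on $[y_k^-,x_k]$ and $[x_k,y_k^+]$ then yields
\begin{equation*}
\int_{y_k^-}^{y_k^+}\!\Big\{\tfrac{\varepsilon_k}{2}[D(u_k)(u_k)_x]^2+\tfrac{G(u_k)}{\varepsilon_k}\Big\}\,dx\;\ge\;2\!\int_{\alpha+o(1)}^{u_k(x_k)}\!\sqrt{2G(s)}\,D(s)\,ds\;\ge\;\eta\;>\;0,
\end{equation*}
for $k$ large, with $\eta$ depending only on $K$, $G$ and $D$. Summing this with the lower bound of Proposition \ref{prop:lower} on the disjoint intervals $(h_i-r,h_i+r)$ gives $E_{\varepsilon_k}[u_k]\ge N\gamma_0+\eta-C\exp(-A/\varepsilon_k)$, contradicting $E_{\varepsilon_k}[u_k]\le N\gamma_0+M_k$ for $k$ large. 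Choosing $M:=\eta/4$ and $\varepsilon_0$ so that $C\exp(-A/\varepsilon_0)<\eta/4$ makes the contradiction uniform.

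Case (b) is handled along the same lines: on $W_j:=(h_j-\tfrac12\delta_1,h_j+\tfrac12\delta_1)$ the image $u_k(W_j)$ avoids $K$, so by continuity it lies in a single connected component of $I\setminus K$; but $L^1$-closeness to $v$ (which jumps from $\alpha$ to $\beta$ at $h_j$) supplies, via Chebyshev, points $z_k^-,z_k^+\in W_j$ with $u_k(z_k^-)$ close to $\alpha$ and $u_k(z_k^+)$ close to $\beta$, which is impossible when $K$ separates $\alpha$ and $\beta$ in $I$ and otherwise forces the transition to occur on a strict subinterval of $(h_j-r,h_j+r)$ not centered at $h_j$, allowing one to sharpen the integral in Proposition \ref{prop:lower} by an $\varepsilon$-independent positive amount $\eta'>0$. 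The main obstacle, to my mind, is making this last improvement quantitative when $K$ does not separate $\alpha$ from $\beta$: one must refine the ODE comparison in the proof of Proposition \ref{prop:lower} (the $\psi$--$\hat\psi$ argument) to show that missing the $K$-crossing costs a fixed positive fraction of $\gamma_0$, while simultaneously ensuring that the constants $\hat\delta$, $M$, $\varepsilon_0$ produced are independent of $u$ and depend only on $G,D,v,K,\delta_1$.
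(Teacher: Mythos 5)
Your case (a) contains a genuine error in the disjointness claim, and your case (b) is left open; both are avoided by a more elementary route. Concretely: from $d(x_k,I[v])\ge\tfrac12\delta_1$ and $\delta_1<r$ you conclude that $W=(x^*-\tfrac14\delta_1,x^*+\tfrac14\delta_1)$ is disjoint from the intervals $(h_i-r,h_i+r)$, but the inequality goes the wrong way: a point at distance $\tfrac12\delta_1<r$ from $h_i$ can lie deep inside $(h_i-r,h_i+r)$, so $W$ may be entirely contained in one of them. You therefore cannot add your local contribution $\eta$ to the lower bound of Proposition \ref{prop:lower}, since the two energy integrals would overlap. The repair is to give up the sharp exponential bound altogether: pick $\rho>0$ with $I_\rho:=(\alpha-\rho,\alpha+\rho)\cup(\beta-\rho,\beta+\rho)\subset I\setminus K$, use $L^1$-closeness to find $x_i^\pm\in(h_i\mp\tfrac12\delta_1,h_i)\cup(h_i,h_i\pm\tfrac12\delta_1)$ with $|u(x_i^\pm)-v(x_i^\pm)|<\rho$, and apply only the Young-type inequality \eqref{eq:ineq} on $[x_i^-,x_i^+]$; each transition then contributes at least $\gamma_0-M/N$ with an $\varepsilon$-\emph{independent} deficit $M:=2N\max\bigl\{\int_\alpha^{\alpha+\rho}\sqrt{2G}\,D,\int_{\beta-\rho}^{\beta}\sqrt{2G}\,D\bigr\}$. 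Since these intervals lie in $(h_i-\tfrac12\delta_1,h_i+\tfrac12\delta_1)$ while $x_k$ lies outside all of them, the extra segment joining $x_k$ to a nearby point where $u\in I_\rho$ is genuinely disjoint from them, and \eqref{eq:ineq} on that segment yields at least $\inf\bigl\{\bigl|\int_{\xi_1}^{\xi_2}\sqrt{2G(s)}D(s)\,ds\bigr|:\xi_1\in K,\ \xi_2\in I_\rho\bigr\}$, which exceeds $2M$ once $\rho$ is small. Summing gives $E_\varepsilon[u]>N\gamma_0+M$, the desired contradiction, with no use of Proposition \ref{prop:lower} and hence no $\varepsilon_0$-bookkeeping beyond making $\hat\delta$ small enough for the Chebyshev selections.

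Your case (b) does not require the quantitative refinement of the $\psi$--$\hat\psi$ comparison that you flag as the main obstacle; it requires no energy estimate at all. If $d(h_j,I_K[u])\ge\tfrac12\delta_1$, then the continuous function $u$ avoids $K$ on $(h_j-\tfrac12\delta_1,h_j+\tfrac12\delta_1)$, yet it takes a value $\rho$-close to $\alpha$ at $x_j^-$ and a value $\rho$-close to $\beta$ at $x_j^+$; by the intermediate value theorem $u$ attains every value in $(\alpha+\rho,\beta-\rho)$ on $[x_j^-,x_j^+]$, so as soon as $K$ meets that range this case simply cannot occur (and for the sets $K$ relevant to Theorem \ref{thm:interface} this is the situation; if $K$ misses $(\alpha+\rho,\beta-\rho)$ entirely the interface $I_K$ does not track the transitions and the statement is vacuous in spirit). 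In summary, your overall strategy (contradiction, localization, Young's inequality) is in the right family, but the two load-bearing steps as written do not close, and the paper's argument shows they can be replaced by purely elementary considerations with an $O(1)$ energy gap rather than the exponential one.
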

\begin{proof}
Fix $\delta_1\in(0,r)$ and choose $\rho>0$ small enough that 
\begin{equation*}
	I_\rho:=(\alpha-\rho,\alpha+\rho)\cup(\beta-\rho,\beta+\rho)\subset I\backslash K, 
\end{equation*}
and 
\begin{equation*}
	\inf\left\{\left|\int_{\xi_1}^{\xi_2}\sqrt{2G(s)}D(s)\,ds\right| : \xi_1\in K, \xi_2\in I_\rho\right\}>2M,
\end{equation*}
where
\begin{equation*}
	M:=2N\max\left\{\int_{\alpha}^{\alpha+\rho}\sqrt{2G(s)}D(s)\,ds, \, \int_{\beta-\rho}^{\beta}\sqrt{2G(s)}D(s)\,ds \right\}.
\end{equation*}
By reasoning as in the proof of \eqref{2points} in Proposition \ref{prop:lower}, we can prove that for each $i$ there exist
\begin{equation*}
	x^-_{i}\in(h_i-\delta_1/2,h_i) \qquad \textrm{and} \qquad x^+_{i}\in(h_i,h_i+\delta_1/2),
\end{equation*}
such that
\begin{equation*}
	|u(x^-_{i})-v(x^-_{i})|<\rho \qquad \textrm{and} \qquad |u(x^+_{i})-v(x^+_{i})|<\rho.
\end{equation*}
Suppose that \eqref{lem:d-interfaces} is violated. 
Using \eqref{eq:ineq}, we deduce
\begin{align}
	E_\varepsilon[u]\geq&\sum_{i=1}^N\left|\int_{u(x^-_{i})}^{u(x^+_{i})}\sqrt{2G(s)}D(s)\,ds\right|\notag\\ 
	& \qquad +\inf\left\{\left|\int_{\xi_1}^{\xi_2}\sqrt{2G(s)}D(s)\,ds\right| : \xi_1\in K, \xi_2\in I_\rho\right\}. \label{diseq:E1}
\end{align}
On the other hand, we have
\begin{align*}
	\left|\int_{u(x^-_{i})}^{u(x^+_{i})}\sqrt{2G(s)}D(s)\,ds\right|&\geq\int_{\alpha}^{\beta}\sqrt{2G(s)}D(s)\,ds\\
	&\qquad-\int_{\alpha}^{\alpha+\rho}\sqrt{2G(s)}D(s)\,ds\\
	&\qquad -\int_{\beta-\rho}^{\beta}\sqrt{2G(s)}D(s)\,ds\\
	&\geq\gamma_0-\frac{M}{N}. 
\end{align*}
Substituting the latter bound in \eqref{diseq:E1}, we deduce
\begin{equation*}
	E_\varepsilon[u]\geq N\gamma_0-M+\inf\left\{\left|\int_{\xi_1}^{\xi_2}\sqrt{2G(s)}D(s)\,ds\right| : \xi_1\in K, \xi_2\in I_\rho\right\}.
\end{equation*}
For the choice of $\rho$,  we obtain	
\begin{align*}
	E_\varepsilon[u]>N\gamma_0+M,
\end{align*}
which is a contradiction with assumption \eqref{eq:lem-interf}. Hence, the bound \eqref{lem:d-interfaces} is true.
\end{proof}

Thank to Theorem \ref{thm:main} and Lemma \ref{lem:interface} we can prove the following result, 
which states that the velocity of the transition points is (at most) exponentially small.
\begin{thm}\label{thm:interface}
Assume that $f,D\in C^2(I)$ satisfy \eqref{eq:ass-D}-\eqref{eq:ass-f}-\eqref{eq:ass-int0}-\eqref{eq:ass-int1}.
Let $u^\varepsilon$ be the solution of \eqref{eq:D-model}-\eqref{eq:Neu}-\eqref{eq:initial}, 
with initial datum $u_0^{\varepsilon}$ satisfying \eqref{eq:u0continI}, \eqref{eq:ass-u0} and \eqref{eq:energy-ini}. 
Given $\delta_1\in(0,r)$ and a closed subset $K\subset I\backslash\{\alpha,\beta\}$, set
\begin{equation*}
	t_\varepsilon(\delta_1)=\inf\{t:\; d(I_K[u^\varepsilon(\cdot,t)],I_K[u_0^\varepsilon])>\delta_1\}.
\end{equation*}
There exists $\varepsilon_0>0$ such that if $\varepsilon\in(0,\varepsilon_0)$ then
\begin{equation*}
	t_\varepsilon(\delta_1)>\exp(A/\varepsilon).
\end{equation*}
\end{thm}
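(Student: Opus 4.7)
The plan is to bootstrap from Theorem \ref{thm:main} and Lemma \ref{lem:interface}: the former controls $L^1$-closeness of $u^\varepsilon(\cdot,t)$ to the piecewise constant profile $v$, and the latter turns such $L^1$-closeness (together with a near-minimal energy) into Hausdorff-closeness of interfaces. Combining the two, the interfaces of $u^\varepsilon(\cdot,t)$ will remain close to $I[v]$, hence close to the initial interface $I_K[u_0^\varepsilon]$, throughout the exponentially long time window.

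First I would fix $\delta_1\in(0,r)$ and the closed set $K\subset I\setminus\{\alpha,\beta\}$, and invoke Lemma \ref{lem:interface} to obtain the associated constants $\hat\delta,\varepsilon_0,M>0$. Next I would shrink $\varepsilon_0$ if necessary so that the bound from Theorem \ref{thm:main} (applied with, say, $m=1$) guarantees
\begin{equation*}
\sup_{0\le t\le \exp(A/\varepsilon)}\|u^\varepsilon(\cdot,t)-v\|_{L^1}<\hat\delta,
\end{equation*}
for all $\varepsilon\in(0,\varepsilon_0)$. At the same time, the dissipation inequality \eqref{eq:energy-var} together with the hypothesis \eqref{eq:energy-ini} on the initial datum gives
\begin{equation*}
E_\varepsilon[u^\varepsilon(\cdot,t)]\le E_\varepsilon[u_0^\varepsilon]\le N\gamma_0+C\exp(-A/\varepsilon)\le N\gamma_0+M,
\end{equation*}
provided $\varepsilon$ is small enough. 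Thus both hypotheses of Lemma \ref{lem:interface} are in force along the whole trajectory $t\in[0,\exp(A/\varepsilon)]$, yielding $d(I_K[u^\varepsilon(\cdot,t)],I[v])<\tfrac12\delta_1$ for every such $t$. Applying the same lemma at $t=0$ (the initial datum $u_0^\varepsilon$ satisfies the same hypotheses by assumption) produces $d(I_K[u_0^\varepsilon],I[v])<\tfrac12\delta_1$.

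The conclusion then follows from the triangle inequality for the Hausdorff distance: for every $t\in[0,\exp(A/\varepsilon)]$,
\begin{equation*}
d\bigl(I_K[u^\varepsilon(\cdot,t)],I_K[u_0^\varepsilon]\bigr)\le d\bigl(I_K[u^\varepsilon(\cdot,t)],I[v]\bigr)+d\bigl(I[v],I_K[u_0^\varepsilon]\bigr)<\tfrac12\delta_1+\tfrac12\delta_1=\delta_1,
\end{equation*}
so the infimum defining $t_\varepsilon(\delta_1)$ cannot be attained before $\exp(A/\varepsilon)$. There is no serious obstacle here once Theorem \ref{thm:main} and Lemma \ref{lem:interface} are in hand; the only delicate point is the careful bookkeeping of the thresholds on $\varepsilon$, which must be chosen uniformly so that the $L^1$-bound, the energy bound $N\gamma_0+M$, and the smallness assumptions in Lemma \ref{lem:interface} all hold simultaneously. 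The triangle inequality for Hausdorff distance is standard and requires no extra hypotheses.
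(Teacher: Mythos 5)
Your proposal is correct and follows essentially the same route as the paper: apply Lemma \ref{lem:interface} both at $t=0$ and at each $t\le\exp(A/\varepsilon)$ (the hypotheses being supplied by Theorem \ref{thm:main} and the monotonicity of the energy), then conclude by the triangle inequality for the Hausdorff distance. Your version is slightly more explicit about the bookkeeping of $\varepsilon_0$ and the energy bound, but the argument is the same.
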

\begin{proof}
Let $\varepsilon_0>0$ so small that \eqref{eq:ass-u0}-\eqref{eq:energy-ini}
imply $u_0^\varepsilon$ satisfies \eqref{eq:lem-interf} for all $\varepsilon\in(0,\varepsilon_0)$.
From Lemma \ref{lem:interface} it follows that
\begin{equation}\label{interfaces-u0}
	d(I_K[u_0^\varepsilon], I[v])<\tfrac12\delta_1.
\end{equation}
Now, consider $u^\varepsilon(\cdot,t)$ for all $t\leq\exp(A/\varepsilon)$.
Assumption \eqref{eq:lem-interf} is satisfied thanks to \eqref{eq:limit} and because $E_\varepsilon[u^\varepsilon](t)$ is a non-increasing function of $t$.
Then,
\begin{equation}\label{interfaces-u}
	d(I_K[u^\varepsilon(t)], I[v])<\tfrac12\delta_1
\end{equation}
for all $t\in(0,\exp(A/\varepsilon))$. 
Combining \eqref{interfaces-u0} and \eqref{interfaces-u}, we obtain
\begin{equation*}
	d(I_K[u^\varepsilon(t)],I_K[u_0^\varepsilon])<\delta_1
\end{equation*}
for all $t\in(0,\exp(A/\varepsilon))$.
\end{proof}

\section{Numerical experiments}\label{sec:num}
In this section, we present some numerical solutions to the IBVP \eqref{eq:D-model}-\eqref{eq:Neu}-\eqref{eq:initial}, which confirm the analytical results of the previous sections.
Moreover, we consider as well the case when the assumptions \eqref{eq:ass-D} or \eqref{eq:ass-f} are not satisfied,
and we numerically show that they are necessary conditions for the exponentially slow motion of the solutions. 

\subsection{Classical Allen-Cahn equation}
As first example, we consider the classical Allen-Cahn equation
\begin{equation}\label{eq:AC-prototype}
	u_t=\e^2 u_{xx}+u-u^3,
\end{equation}
which corresponds to the choices $D(u) \equiv D_0 = 1$ and $f(u)=u^3-u$ in equation \eqref{eq:D-model}.
Therefore, we are considering the case of a constant diffusion coefficient (independent on the density $u$) 
and the simplest case of balanced bistable reaction term $f$, with two stable zeros at $\pm1$ and one unstable zero at $0$.
The metastable dynamics of the solutions to the IBVP associated to equation \eqref{eq:AC-prototype} has been studied in \cite{BrKo90}; 
moreover, equation \eqref{eq:AC-prototype} is the typical example of the general case considered in \cite{CaPe89,ChenX04,FuHa89}.
In particular, it is well know that metastable patterns for such a model can be approximated by using the function defined in \eqref{eq:translayer}, 
with $\Phi_\e(x):=\tanh(x/\sqrt2\e)$, which is the explicit solution of the boundary problem \eqref{eq:Fi} with $D\equiv1$, $f(u)=u^3-u$, $\alpha=-1$ and $\beta=+1$.

In Figure \ref{fig:classic}, we consider the interval $[a,b]=[-4,4]$ and an initial datum with a 6-transition layer structure, 
given by formula \eqref{eq:translayer} with $\Phi_\e(x):=\tanh(x/\sqrt2\e)$.

\begin{figure}[hbtp]
\centering
\includegraphics[scale = 0.4]{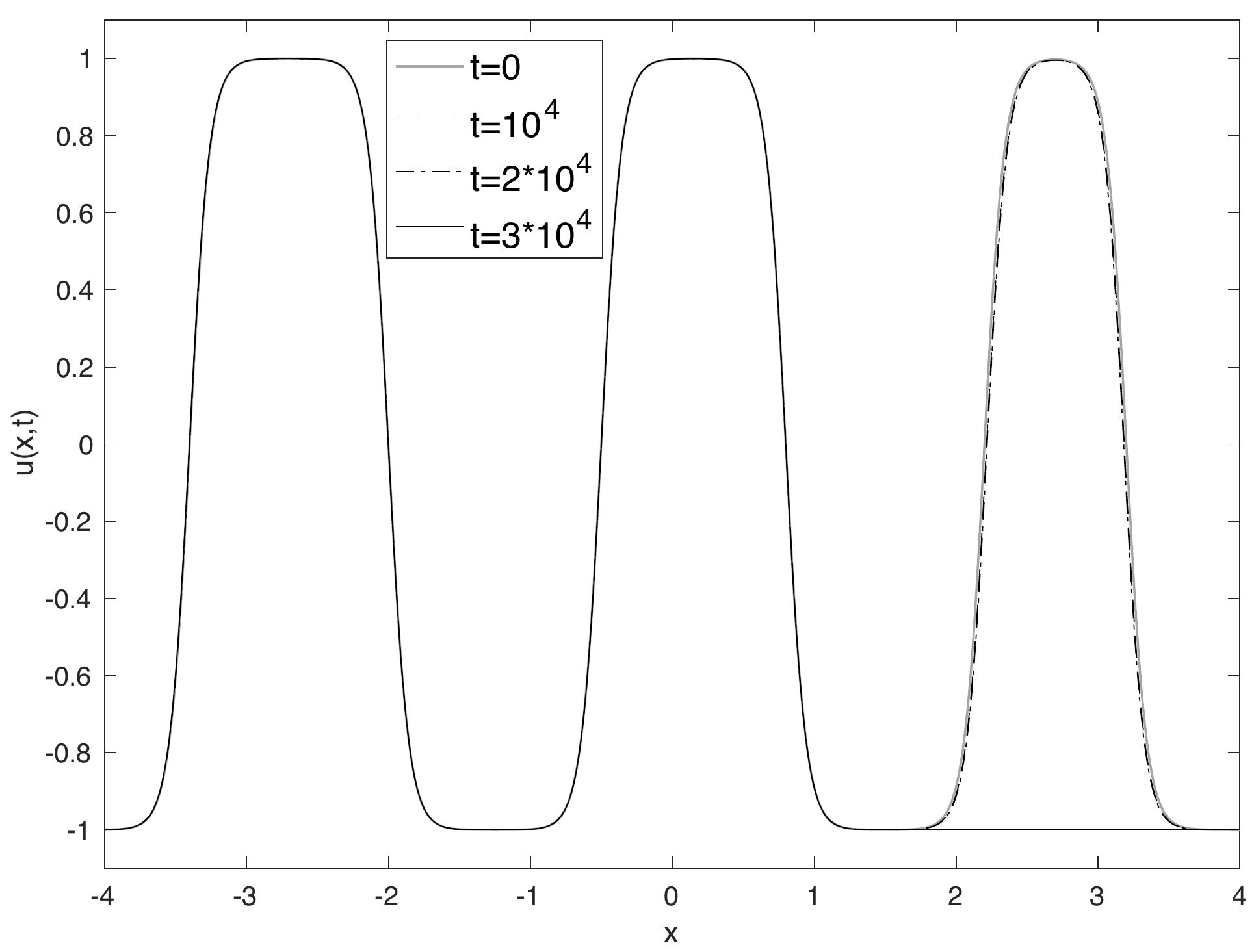}
\caption{Numerical solutions to \eqref{eq:D-model}-\eqref{eq:Neu}-\eqref{eq:initial} with $D(u)=1$, $f(u)=u(u^2-1)$, $\e=0.1$
and initial datum $u_0$ with $N = 6$ transitions, located at $-3.4, -2, -0.5, 0.8, 2.2,3.2$.}
\label{fig:classic}
\end{figure}

We see that the solution maintains the same transition layer structure of the initial datum for a time $t=2*10^4$ 
and after that the two closest transition points collapse (we choose the minimum distance equal to $1$).
Then, we have a solution with a $4$-transition layer structure and it is impossible to distinguish the solution at time $t=0$ and $t=3*10^4$ 
(apart from the elimination of the closest transition points).
Finally, we stress that for Theorem \ref{thm:main}, we expect the solution to maintain the transition layer structure for a time $t\geq\exp(A/\e)$: 
in the example considered in Figure \ref{fig:classic}, we have $A\in(0,\sqrt{2\lambda}/2)$ (since the minimum distance between the layers is $1$), 
$\lambda=2$ (see \eqref{eq:lambda}), $\e=0.1$
and so, $t\geq\exp(10)\approx2.2*10^4$.

As we already mentioned, the metastable dynamics of the solutions to equation \eqref{eq:AC-prototype} has been studied in different papers, 
but we consider it as first example in order to analyze the differences with the density dependent diffusion we consider in the next examples.

\subsection{Mullins diffusion}
Now, we consider the Mullins diffusion $D(u)=(1+u^2)^{-1}$ introduced in Section \ref{sec:intro}.
In this case, $D(u)\in(0,1]$ for any $u\in\R$ and so the diffusion coefficient is smaller than the one considered in Figure \ref{fig:classic}.
Since $D$ is an even function, we can consider the same reaction term $f$ of Figure \ref{fig:classic}, 
that is the odd function $f(u)=u^3-u$, and the function $G$ defined in \eqref{eq:G} satisfies the assumption \eqref{eq:ass-G} with $\alpha=-1$ and $\beta=1$.
In Figure \ref{fig:Mullins}, we consider the same data of Figure \ref{fig:classic} and we only change the diffusion coefficient.

\begin{figure}[hbtp]
\centering
\includegraphics[scale = 0.4]{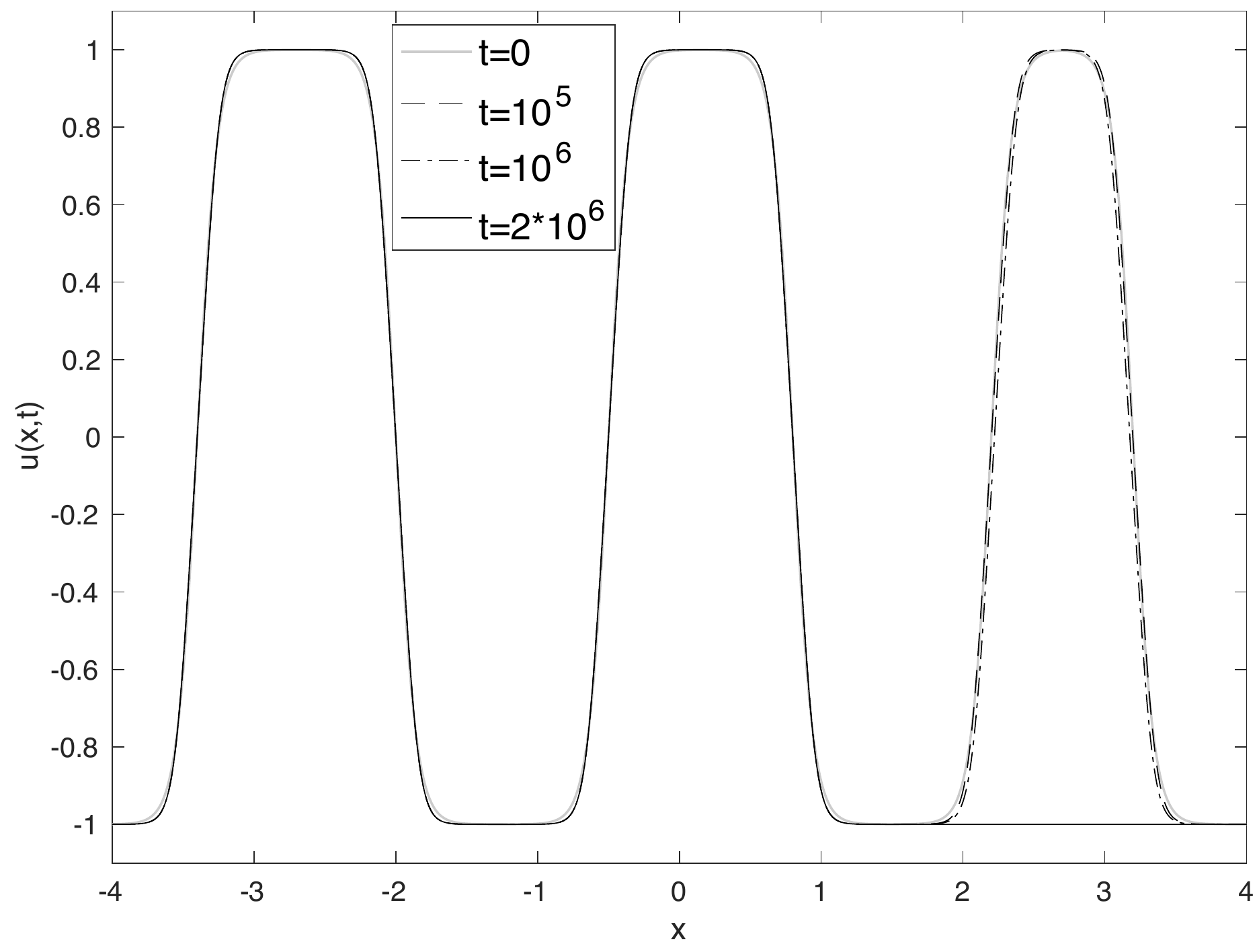}
\caption{Numerical solutions to \eqref{eq:D-model}-\eqref{eq:Neu}-\eqref{eq:initial} with $D(u)=(1+u^2)^{-1}$, $f(u)=u(u^2-1)$ and $\e=0.1$.
The initial datum $u_0$ with 6 transitions is the same of Figure \ref{fig:classic}.}
\label{fig:Mullins}
\end{figure}

In this case, we see that the lifetime of the metastable state is greater than Figure \ref{fig:classic}, 
and the solution maintains the 6-transition layer structure for a time $t=10^6$.
The numerical solution confirms the analytical results, since $D(\pm1)=1/2$ and so $\lambda=4$, with $\lambda$ defined in \eqref{eq:lambda}.
Hence, we expect the collapse of the two closest transition points after a time $t\geq\exp(10\sqrt2)\approx1.4*10^6$.

\subsection{Exponential diffusion}
Here, we consider the exponential diffusion $D(u)=e^u$, which satisfies $D(u)\in[e^{-1},e]$ for any $u\in[-1,1]$.
In particular, the diffusion coefficient is smaller than Figure \ref{fig:classic} for $u\in[-1,0)$ and it is greater for $u\in(0,1]$.
We want assumptions \eqref{eq:ass-int0}-\eqref{eq:ass-int1} to be satisfied and then we choose the reaction term of the form $f(u)=(u-u_*)(u^2-1)$;
in such a way, $\pm1$ are the stable points, while the unstable point $u_*$ has to be chosen so that $\displaystyle\int_{-1}^1f(s)e^s\,ds=0$.
It is easy to check that $u_*=\frac{e^2-7}{2}$ and it follows that $\lambda=\min\{2(1+u_*)e,2(1-u_*)e^{-1}\}=9e^{-1}-e$ and we can choose $A\in(0,0.5443)$.
In the case $\e=0.1$, we have $\exp(5.443)\approx231$ and such a number is very small (with respect to the cases considered in Figures \ref{fig:classic}-\ref{fig:Mullins});
we see in Figure \ref{fig:exp01} that the solution maintains the 6-transition layer structure for a small time.
Next, we choose $\e=0.05$ and we see in Figure \ref{fig:exp005} that the solution maintains the 6-transition layer structure for a time $t\approx10^5$.
Notice also that all the transition points move faster.

\begin{figure}[hbtp]
\centering
\subfigure[$\e=0.1$]{\label{fig:exp01}\includegraphics[scale = 0.4]{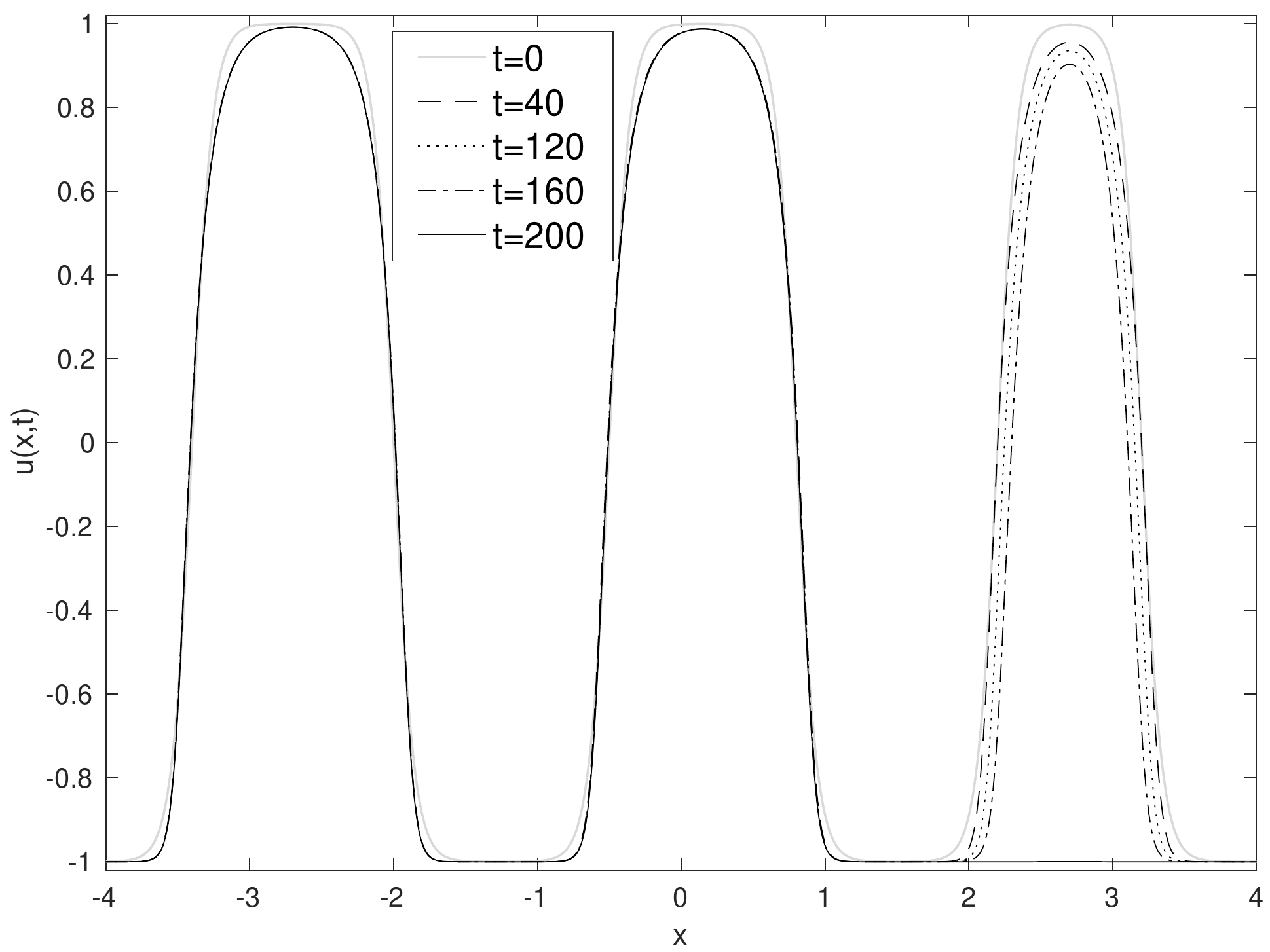}}
\subfigure[$\e=0.05$]{\label{fig:exp005}\includegraphics[scale = 0.4]{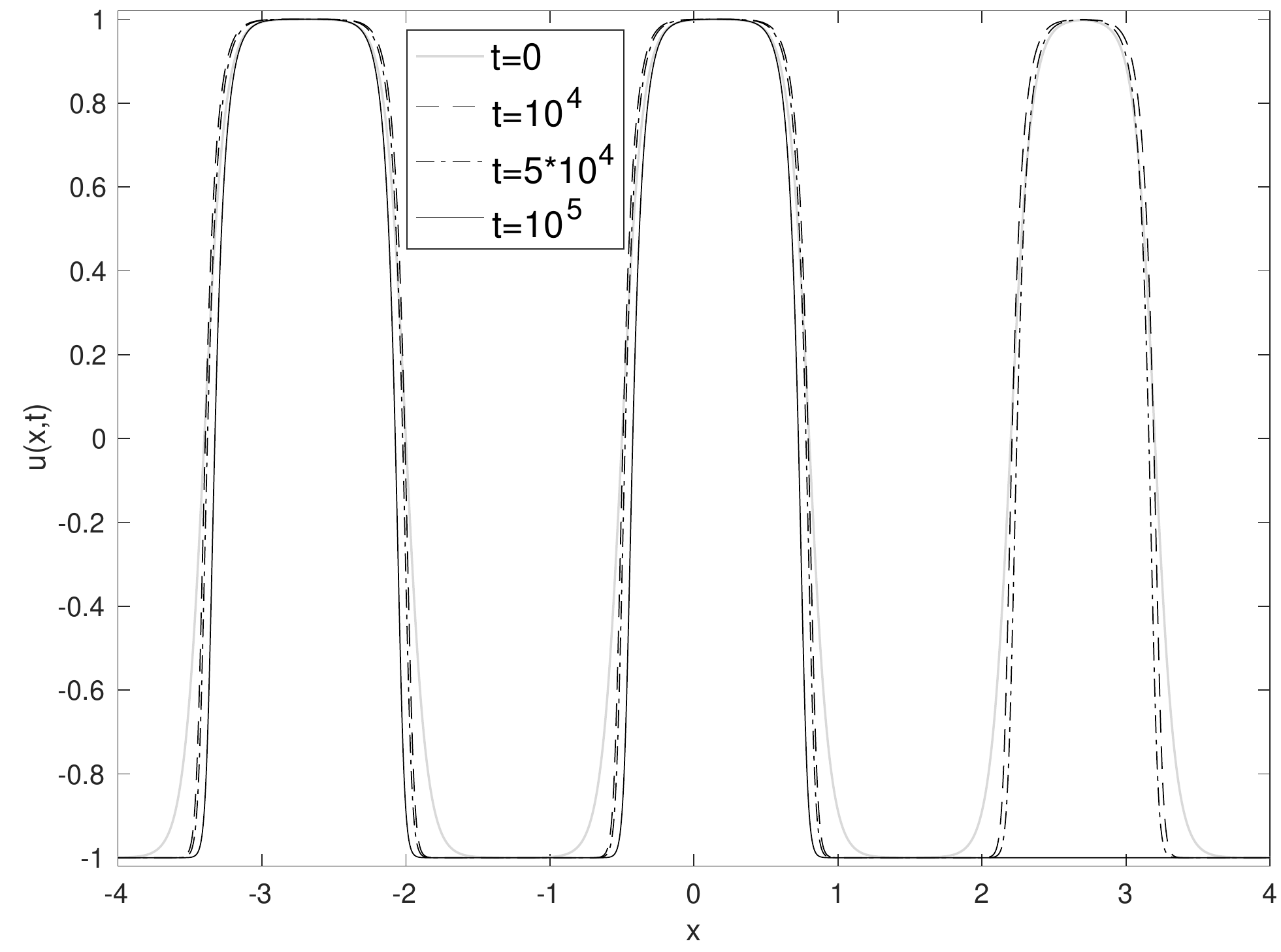}}
\caption{Numerical solutions to \eqref{eq:D-model}-\eqref{eq:Neu}-\eqref{eq:initial} with $D(u)=e^u$, $f(u)=(u-\frac{e^2-7}{2})(u^2-1)$ and two different values for $\e$.
The initial datum $u_0$ is the same of Figures \ref{fig:classic}-\ref{fig:Mullins}.}
\label{fig:exp}
\end{figure}

In all the previous example, all the assumptions of Theorem \ref{thm:main} on the nonlinear diffusion $D$ and on the reaction term $f$ are satisfied;
in the next examples we consider the \emph{degenerate cases} $f'(\alpha)=f'(\beta)=0$ or $D(\alpha)=0$.

\subsection{The degenerate case $f'(\alpha)=f'(\beta)=0$.}
Here, we consider the case when the assumptions \eqref{eq:ass-D}, \eqref{eq:ass-int0} and \eqref{eq:ass-int1} hold, $f(\alpha)=f(\beta)=0$, but $f'(\alpha)=f'(\beta)=0$.
Notice that, in such a case the function $G$ defined in \eqref{eq:G} satisfies $G''(\alpha)=G''(\beta)=0$ and so the assumptions of Proposition \ref{prop:lower} are not satisfied.
In Figure \ref{fig:f-deg}, we choose the Mullins diffusion $D(u)=(1+u^2)^{-1}$ as in Figure \ref{fig:Mullins} and we only change the reaction term.

\begin{figure}[hbtp]
\centering
\subfigure[$f(u)=u(u^2-1)^3$]{\label{fig:f-deg3}\includegraphics[scale = 0.4]{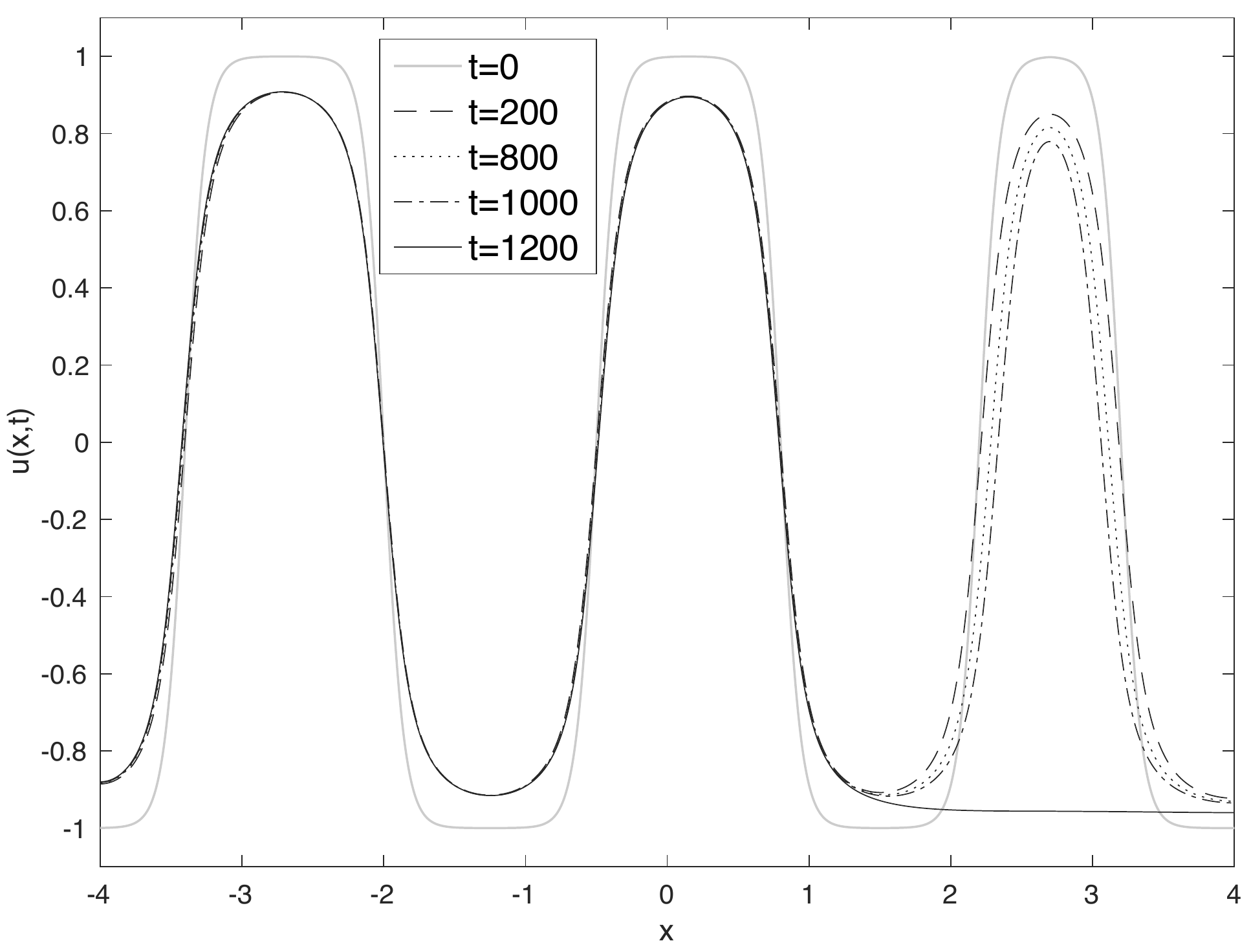}}
\subfigure[$f(u)=u(u^2-1)^5$]{\label{fig:f-deg5}\includegraphics[scale = 0.4]{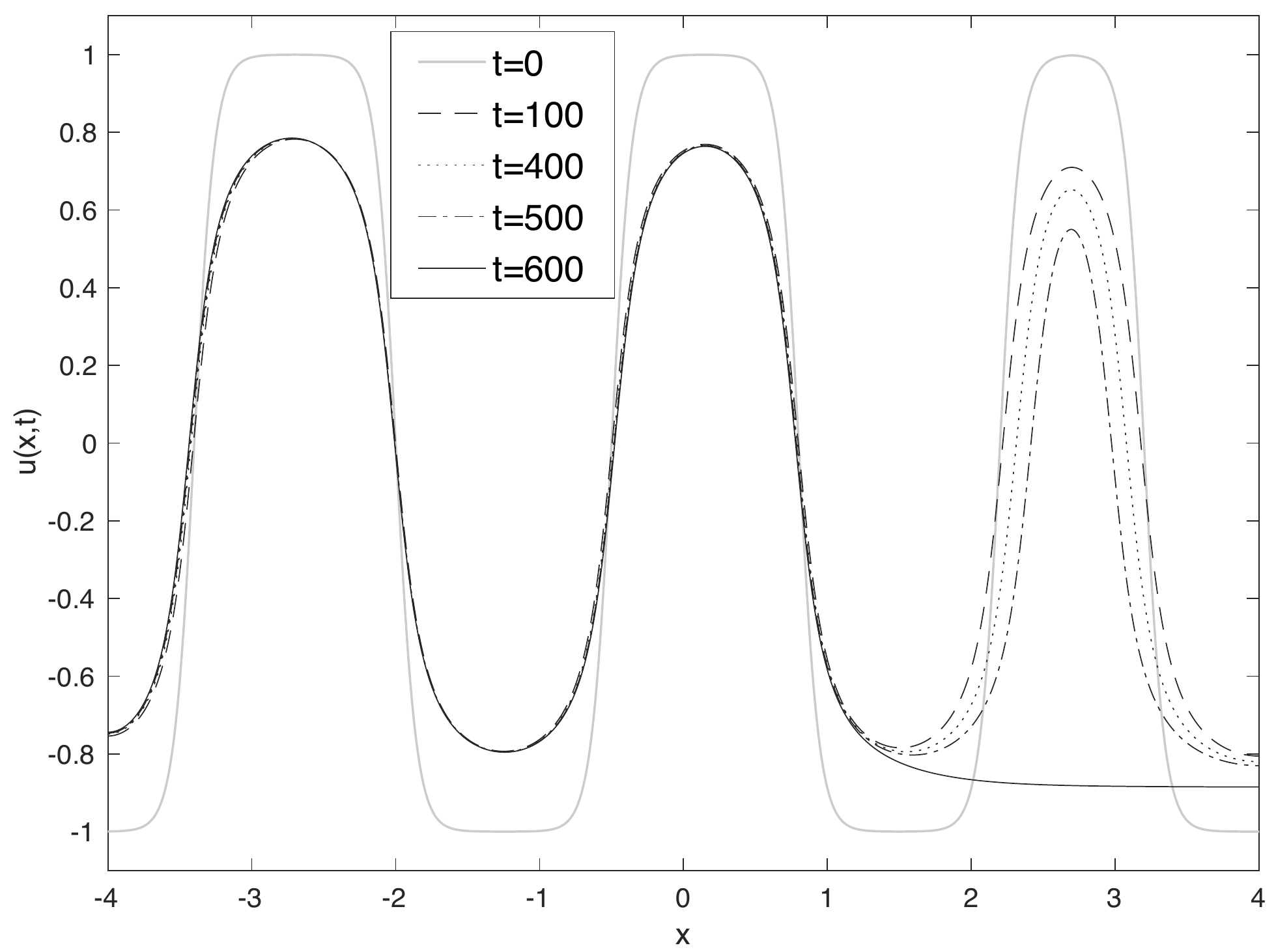}}
\caption{Numerical solutions to \eqref{eq:D-model}-\eqref{eq:Neu}-\eqref{eq:initial} with $D(u)=(1+u^2)^{-1}$, $\e=0.1$
and two different reaction terms satisfying $f'(\alpha)=f'(\beta)=0$.
The initial datum $u_0$ has $6$ transitions as in Figure \ref{fig:Mullins}.}
\label{fig:f-deg}
\end{figure}

It is very important to notice that we choose the same value of $\e$ and the same initial transition layer structure of Figure \ref{fig:Mullins},
but in the \emph{degenerate case} $f'(\alpha)=f'(\beta)=0$ the situation drastically changes and the closest layers
disappear in a much shorter time.
More precisely, in Figure \ref{fig:Mullins} with $f(u)=u(u^2-1)$ the transitions disappear after a time $t=2*10^6$,
in Figure \ref{fig:f-deg3}  with $f(u)=u(u^2-1)^3$ they disappear at time $t=1200$  
and in Figure \ref{fig:f-deg5} with $f(u)=u(u^2-1)^5$ at time $t=600$.

\subsection{The degenerate case $D(\alpha)=0$.}
In this last section, we consider an example where the diffusion coefficient vanishes in one of the two stable points $\alpha,\beta$.
Our choice follows the well-known \emph{porous medium} diffusivity \cite{Vaz07} with $D(u)=u^2$ and a reaction term satisfying \eqref{eq:ass-f} with $\alpha=0$, $\beta=1$
and such that $\displaystyle\int_0^1 f(s)s^2\,ds=0$. 
It is easy to check that the reaction term $f(u)=u(u-\frac23)(u-1)$ satisfies \eqref{eq:ass-f} and \eqref{eq:ass-int0}-\eqref{eq:ass-int1},
with $D(u)=u^2$.
As we already mentioned in the discussion after Proposition \ref{prop:ex-met}, 
in this case the stationary profile connecting the two stable points is \emph{sharp} (see \eqref{eq:sharp-prof}) 
and this is confirmed by the numerical solutions in Figure \ref{fig:D-deg}.
It is to be observed that the lifetime of the metastable state is much smaller than the previous simulations 
both in the case $\e=0.1$ (see Figure \ref{fig:deg2-eps01}) and $\e=0.06$ (see Figure \ref{fig:deg2-eps006}),
and the numerical solutions do not exhibit exponentially slow motion.

\begin{figure}[hbtp]
\centering
\subfigure[$\e=0.1$]{\label{fig:deg2-eps01}\includegraphics[scale = 0.4]{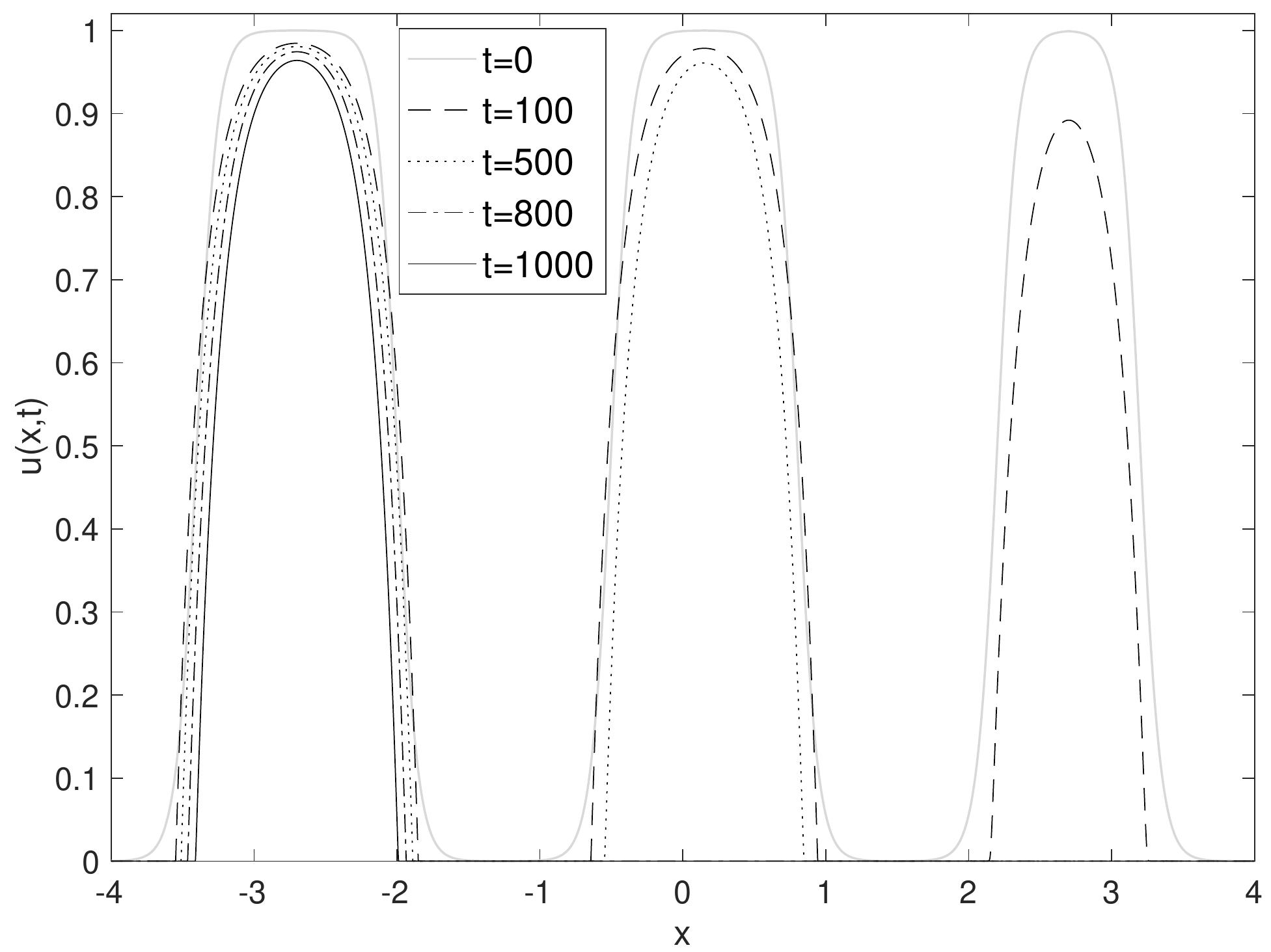}}
\subfigure[$\e=0.06$]{\label{fig:deg2-eps006}\includegraphics[scale = 0.4]{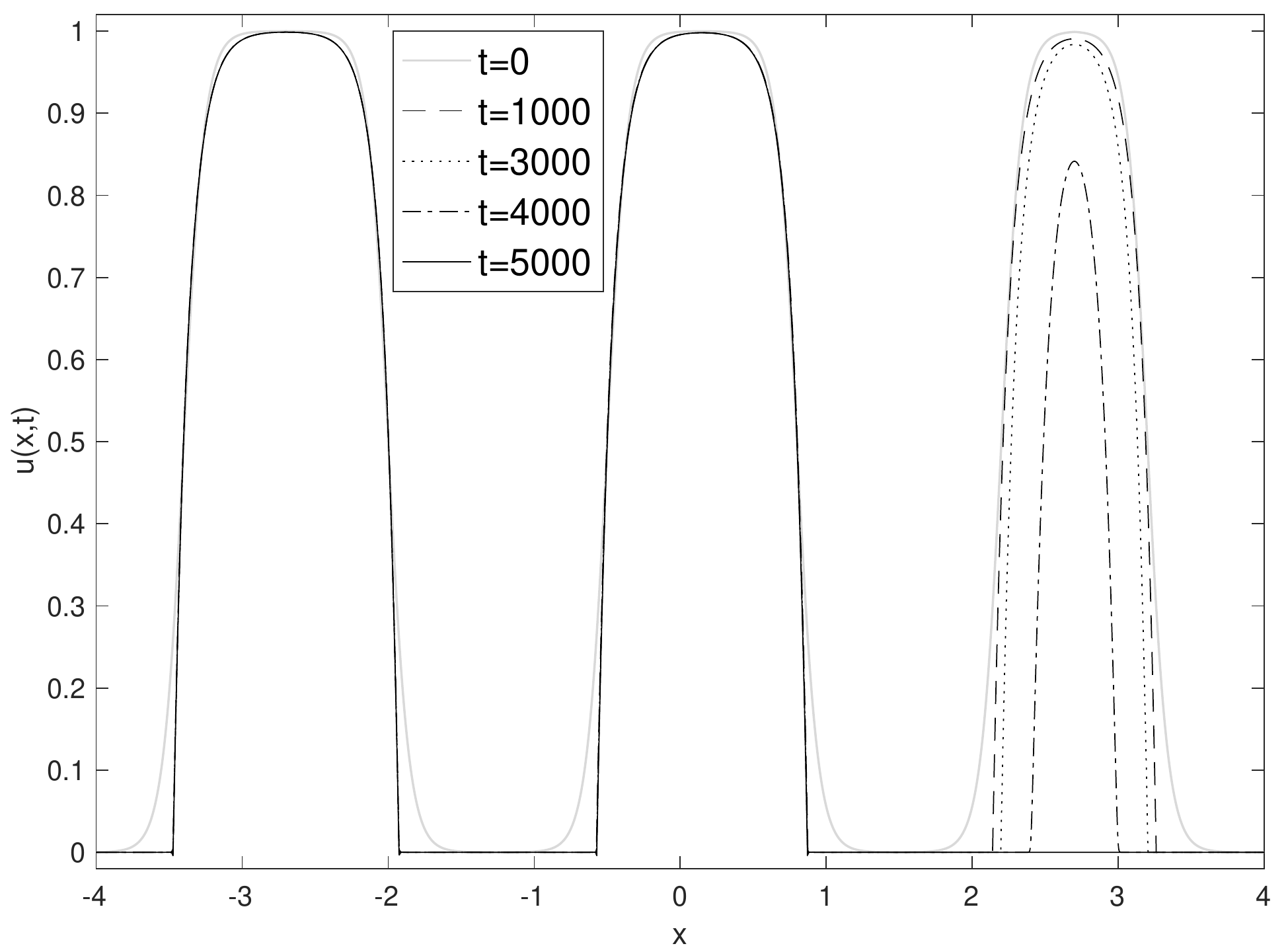}}
\caption{Numerical solutions to \eqref{eq:D-model}-\eqref{eq:Neu}-\eqref{eq:initial} with $D(u)=u^2$, $f(u)=u(u-\frac23)(u-1)$ and two different values for $\e$.
The initial datum has 6 transitions located at the same positions of Figures \ref{fig:classic}-\ref{fig:Mullins}-\ref{fig:exp}-\ref{fig:f-deg}.}
\label{fig:D-deg}
\end{figure}

\section{Discussion}\label{sec:discussion}
In this paper we have rigorously proved the emergence and persistence of metastable structures for the one-dimensional, 
generalized Allen-Cahn equation \eqref{eq:D-model} with phase-dependent diffusivity coefficient. 
Motivated by physical models such as the Mullins diffusion for thermal grooving and the exponential function for diffusion profiles in metal alloys, 
we have assumed that the diffusivity is strictly positive and uniformly bounded below. 
In this fashion, we have extended the previous results on metastability to the case of reaction-diffusion problems where the diffusion is nonlinear and strictly positive. 
The method of proof is based on the energy approach by Bronsard and Kohn \cite{BrKo90} 
and the energy considered in this paper has the form of a generalized effective Ginzburg-Landau functional (see \eqref{eq:energy}). 
There is an innovative work by Otto and Reznikoff \cite{OtRe07} 
that establishes the phenomenon of dynamic metastability for gradient flow equations associated to a large class of energy functionals. 
In this paper, however, the evolution equation \eqref{eq:D-model} \emph{is not the $L^2$-gradient flow of the energy \eqref{eq:energy}} 
(see, for example, Cirillo \emph{et al.} \cite{CISc16}). 
Actually, equation \eqref{eq:D-model} is \emph{not} the $L^2$-gradient flow of any anisotropic energy functional of Ginzburg-Landau type of the form
\[
E_\e[u] = \int_a^b \left\{ \frac{\e^2}{2}\Phi(u, u_x) + F(u) \right\} \, dx  
\]
as the dedicated reader may easily verify. 
Hence, our result does not enter into the framework of the analysis in \cite{OtRe07}. 
We would like to emphasize that \eqref{eq:energy} is the right  energy functional to study the metastable dynamics of the solutions to \eqref{eq:D-model}. 
Our analysis shows that the energy method is applicable to evolution equations beyond the class of gradient flows 
and that the phenomenon of metastability holds for more general reaction-diffusion models. 
We regard these as the main contributions of this paper.

In addition, we have presented the output of numerical simulations for the IBVP problem \eqref{eq:D-model}-\eqref{eq:Neu}-\eqref{eq:initial} 
with different diffusion profiles satisfying the assumptions of this paper 
(namely, the Mullins \eqref{eq:MullinsD} and exponential \eqref{eq:expDiff} diffusion functions), which verify the analytical results. 
We have performed numerical simulations in the case of degenerate diffusion coefficients (for which $D(\alpha) = 0$ or $D(\beta) = 0$) as well. 
These simulations show that the interface layers do not exhibit exponentially slow motion. 
In other words, we have provided numerical evidence that the non-degeneracy conditions on $D$ are necessary for metastability, 
the former understood as exponentially slow motion of interface layers. 
It is to be observed that degenerate diffusions are of interest within the physics community.  
For example, in some binary alloys the diffusivity seems to be zero outside a relatively narrow interfacial band (cf. \cite{TaCa94,EllGa96}), 
that is, $D$ is zero outside the grain boundary including, e.g., the pure phases $u = \pm 1$, and positive inside. 
A function of the form
\[
D(u) = D_0 (1 - u^2), \qquad u\in [-1,1],
\]
can be justified under thermodynamic considerations (see Taylor and Cahn \cite{TaCa94} and the references therein). 
Another example is the aforementioned porous medium type diffusion.

In view of our numerical results and the theoretical ones of \cite{BetSme2013}, 
where the authors consider the classical Allen--Cahn equation \eqref{eq:Al-Ca1d} with \emph{degenerate} $f$, 
we conjecture that the interface motion in the diffusion degenerate case is \emph{algebraically} slow 
and that an energy bound of the form \eqref{eq:sharpest} is not feasible. 
This is an open problem that warrants future investigations.

\section*{Acknowledgements}

The work of R. G. Plaza was partially supported by DGAPA-UNAM, program PAPIIT, grant IN-100318.

\bibliography{riferimenti}

\bibliographystyle{newstyle}


\end{document}